\newtheorem{theorem}{Theorem}
\newtheorem{corollary}[theorem]{Corollary}
\newtheorem{proposition}[theorem]{Proposition}
\newtheorem{lemma}[theorem]{Lemma}
\newcommand{\lrmin}{{\rm lrmin\,}}
\newcommand{\cpkk}{{\rm cpk\,}}
\newcommand{\cpk}{{\rm cpk\,}}
\newcommand{\lrmax}{{\rm lrmax\,}}
\newcommand{\lramax}{{\rm lramax\,}}
\newcommand{\Lramax}{{\rm Lramax\,}}
\newcommand{\ap}{{\rm ap\,}}
\newcommand{\lpk}{{\rm lpk\,}}
\newcommand{\des}{{\rm des\,}}
\newcommand{\exc}{{\rm exc\,}}
\newcommand{\aexc}{{\rm aexc\,}}
\newcommand{\we}{{\rm wexc\,}}
\newcommand{\cyc}{{\rm cyc\,}}
\newcommand{\fix}{{\rm fix\,}}
\newcommand{\mdn}{\mathcal{D}}
\newcommand{\msn}{\mathfrak{S}_n}
\newcommand{\ms}{\mathfrak{S}}
\newcommand{\lrf}[1]{\lfloor #1\rfloor}
\newcommand{\mqn}{\mathcal{Q}_n}
\newcommand{\z}{ \mathbb{Z}}
\newcommand{\asc}{{\rm asc\,}}
\numberwithin{equation}{section}
\begin{document}
\title{The $1/k$-Eulerian polynomials of type $B$}

\author{Shi-Mei Ma}
\address{School of Mathematics and Statistics,
Northeastern University at Qinhuangdao, Hebei, P.R. China}
\email{shimeimapapers@163.com}
\thanks{Supported by NSFC (Grant 11401083,11571235) and NSC (107-2115-M-001-009-MY3).}
\author{Jun Ma}
\address{Department of Mathematics, Shanghai Jiao Tong University, Shanghai, P.R. China}
\email{majun904@sjtu.edu.cn}
\author{Jean Yeh}
\address{Departemnt of Mathematics, National Kaohsiung Normal University, Kaohsiung 82446, Taiwan}
\email{chunchenyeh@nknu.edu.tw}
\author{Yeong-Nan Yeh}
\address{Institute of Mathematics,
        Academia Sinica, Taipei, Taiwan}
\email{mayeh@math.sinica.edu.tw}

\subjclass[2010]{Primary 05A05, 05E45; Secondary 05A15, 26C05}


\keywords{Eulerian polynomials, Gamma-positivity, Derangement polynomials}

\begin{abstract}
In this paper, we give a type $B$ analogue of the $1/k$-Eulerian polynomials. Properties of this kind of polynomials,
including combinatorial interpretations, recurrence relations and $\gamma$-positivity are studied.
In particular, we show that the $1/k$-Eulerian polynomials of type $B$ are $\gamma$-positive when $k>0$.
Moreover, we obtain the corresponding results for derangements of type $B$.
We show that a type $B$ $1/k$-derangement polynomials $d_n^B(x;k)$ are bi-$\gamma$-positive when $k\geq 1/2$.
In particular, we get a symmetric decomposition of $d_n^B(x;1/2)$
in terms of the classical derangement polynomials.
\end{abstract}

\maketitle
\section{Introduction}
In this paper, we first give a type $B$ analogue of the $1/k$-Eulerian polynomials and then we
derive the corresponding results for derangements of type $B$. In the following, we give a survey on the study of $1/k$-Eulerian polynomials.

Throughout this paper, we always let $k$ be a fixed positive number. Following Savage and Viswanathan~\cite{Savage12},
the {\it $1/k$-Eulerian polynomials} $A_{n}^{(k)}(x)$ are defined by
\begin{equation}\label{Ankx-deff}
\sum_{n=0}^\infty A_{n}^{(k)}(x)\frac{z^n}{n!}=\left(\frac{1-x}{e^{kz{(x-1)}}-x}\right)^{\frac{1}{k}}.
\end{equation}
When $k=1$, the polynomial $A_n^{(k)}(x)$ reduces to the classical Eulerian polynomial $A_n(x)$.
Savage and Viswanathan~\cite{Savage12} showed that
\begin{equation*}\label{Ankx-Savage}
A_n^{(k)}(x)=\sum_{\textbf{e}\in I_{n,k}}x^{\asc(\textbf{e})},
\end{equation*}
where $I_{n,k}=\left\{ \textbf{e}~|~0\leq e_i\leq (i-1)k\right\}$ is the set of $n$-dimensional \emph{$k$-inversion sequences}
with $\textbf{e}=(e_1,e_2,\ldots,e_n)\in\z^n$ and
$$\asc(\textbf{e})=\#\left\{i:1\leq i\leq n-1~\big{|}~\frac{e_i}{(i-1)k+1}<\frac{e_{i+1}}{ik+1}\right\}.$$

A $k$-{\it Stirling permutation} of order $n$ is a permutation of the multiset
$\{1^k,2^k,\ldots,n^k\}$ such that for each $i$, $1\leq i\leq n$,
all entries between the two occurrences of $i$ are at least $i$. When $k=2$, the $k$-Stirling permutation
reduces to the ordinary Stirling permutation~\cite{Gessel78}.
Denote by $\mqn(k)$ the set of $k$-{\it Stirling permutation} of order $n$.
Let $\sigma=\sigma_1\sigma_2\cdots\sigma_{kn}\in \mqn(k)$.
We say that an index $i\in \{2,3,\ldots,nk-k+1\}$ is a {\it longest ascent plateau} if $\sigma_{i-1}<\sigma_i=\sigma_{i+1}=\cdots=\sigma_{i+k-1}$.
Let $\ap(\sigma)$ be the number of the longest ascent plateaus of $\sigma$.
According to~\cite[Theorem~2]{Ma15}, we have
\begin{equation*}\label{Ankx-stirling}
A_n^{(k)}(x)=\sum_{\sigma\in \mqn(k)}x^{\ap(\sigma)}.
\end{equation*}

Let $[n]=\{1,2,\ldots,n\}$.
Let $\msn$ denote the symmetric group of all permutations of $[n]$ and let $\pi=\pi(1)\cdots\pi(n)\in\msn$.
A {\it descent} (resp.~{\it ascent, excedance}) of $\pi$ is an index $i\in[n-1]$
such that $\pi(i)>\pi(i+1)$ (resp.~$\pi(i)<\pi(i+1)$, $\pi(i)>i$,). Let $\des(\pi)$ (resp.~$\asc(\pi)$, $\exc(\pi)$,) denote the number of descents (resp.~ascents, excedances) of $\pi$.
It is well known that the statistics $\des(\pi),\asc(\pi)$ and $\exc(\pi)$ are equidistributed.
The {\it Eulerian polynomial} $A_n(x)$ is given as follows:
$$A_n(x)=\sum_{\pi\in\msn}x^{\des(\pi)}=\sum_{\pi\in\msn}x^{\asc(\pi)}=\sum_{\pi\in\msn}x^{\exc(\pi)}.$$
In~\cite{Foata70},
Foata and Sch\"utzenberger introduced a $q$-analog of $A_n(x)$ defined by
\begin{equation*}
A_n(x,q)=\sum_{\pi\in\msn}x^{\exc(\pi)}q^{\cyc(\pi)},
\end{equation*}
where $\cyc(\pi)$ is the number of cycles of $\pi$.
Brenti~\cite{Brenti00} showed that some of the crucial properties of Eulerian polynomials have nice $q$-analogues for the polynomials $A_{n}(x;q)$ .
In particular, we have $$\sum_{n=0}^\infty A_n(x,q)\frac{z^n}{n!}=\left(\frac{1-x}{e^{z{(x-1)}}-x}\right)^q.$$
Therefore, $A_{n}^{(k)}(x)=k^n A_{n}(x,1/k)$.
A bijective proof of the following identity was recently given in~\cite{Chao19}: $$\sum_{\textbf{e}\in I_{n,k}}x^{\asc(\textbf{e})}=\sum_{\pi\in\msn}x^{\asc(\pi)}k^{n-\lrmin(\pi)}.$$

Let $f(x)=\sum_{i=0}^nf_ix^i$ be a symmetric polynomial, i.e., $f_i=f_{n-i}$ for any $0\leq i\leq n$. Then $f(x)$ can be expanded uniquely as
$f(x)=\sum_{k=0}^{\lrf{\frac{n}{2}}}\gamma_kx^k(1+x)^{n-2k}$, and it is said to be {\it $\gamma$-positive}
if $\gamma_k\geq 0$ for $0\leq k\leq \lrf{\frac{n}{2}}$ (see~\cite{Gal05}).
The $\gamma$-positivity of $f(x)$ implies symmetry and unimodality of $f(x)$.
We refer the reader to Athanasiadis's survey article~\cite{Athanasiadis17} for details.
The $\gamma$-positivity of Eulerian polynomials was first observed by
Foata and Sch\"utzenberger~\cite{Foata70}. Subsequently, Foata and Strehl~\cite{Foata73} proved the $\gamma$-positivity of Eulerian polynomials
by using the well known Foata-Strehl action.
By using the theory of enriched $P$-partitions, Stembridge~\cite[Remark 4.8]{Stembridge97} showed that
\begin{equation*}
A_n(x)=\frac{1}{2^{n-1}}\sum_{i=0}^{(n-1)/2}4^iP(n,i)x^i(1+x)^{n-1-2i},
\end{equation*}
where $P(n,i)$ is the the number of permutations in $\msn$ with $i$ {\it interior peaks}, i.e., the indices $i\in\{2,\ldots,n-1\}$ such that $\pi(i-1)<\pi(i)>\pi(i+1)$.
It should be noted that the polynomial $A_{n}^{(k)}(x)$ is not symmetric if $k\neq 1$, and so it is not $\gamma$-positive.

Let $\pm[n]=[n]\cup\{-1,\ldots,-n\}$.
Let $B_n$ be the {\it hyperoctahedral group} of rank $n$. Let $w=w(1)w(2)\cdots w(n)\in B_n$.
Elements of $B_n$ are signed permutations of $\pm[n]$ with the property that $w(-i)=-w(i)$ for all $i\in [n]$.
Let $$\des_B(w)=\#\{i\in\{0,1,\ldots,n-1\}\mid w(i)>w({i+1})\},$$ where $w(0)=0$.
We say that $i\in [n]$ is a {\it weak excedance} of $w$ if $w(i)=i$ or $w(|w(i)|)>w(i)$ (see~\cite[p.~431]{Brenti94}).
Let $\we(w)$ be the number of weak excedances of $w$.
According to~\cite[Theorem~3.15]{Brenti94}, the statistics $\des_B(w)$ and $\we(w)$ have the same distribution over $B_n$,
and their common enumerative polynomial is the
{\it Eulerian polynomial of type $B$}:
$$B_n(x)=\sum_{w\in B_n}x^{\des_B(w)}=\sum_{w\in B_n}x^{\we(w)}.$$


A {\it left peak} of $\pi\in\msn$ is an index $i\in[n-1]$ such that $\pi(i-1)<\pi(i)>\pi(i+1)$, where we take $\pi(0)=0$.
Let $\lpk(\pi)$ be the number of left peaks in $\pi$.
Let $Q(n,i)$ be the number of permutations in $\msn$ with $i$ left peaks.
By using the theory of enriched $P$-partitions, Petersen~\cite[Proposition~4.15]{Petersen07} obtained the following result.
\begin{theorem}\label{PetersenThm}
We have
$B_n(x)=\sum_{i=0}^{\lrf{n/2}}4^iQ(n,i)x^i(1+x)^{n-2i}$.
\end{theorem}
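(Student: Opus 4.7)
The plan is to partition $B_n$ into $n!$ sign-classes indexed by the underlying unsigned permutation and evaluate the descent generating function on each class by a transfer-matrix argument. For $\pi\in\msn$, let $C(\pi)=\{w\in B_n:|w(i)|=\pi(i)\text{ for all }i\in[n]\}$, so $|C(\pi)|=2^n$. Writing $w(i)=\epsilon_i\pi(i)$ with $\epsilon_i\in\{\pm1\}$ and $w(0)=0$, one has
$\des_B(w)=[\epsilon_1=-1]+\sum_{i=1}^{n-1}[\epsilon_i\pi(i)>\epsilon_{i+1}\pi(i+1)]$,
and each summand depends only on the neighboring pair of signs and on whether $\pi(i)<\pi(i+1)$ or $\pi(i)>\pi(i+1)$.

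Hence $\sum_{w\in C(\pi)}x^{\des_B(w)}=v_0\,M_1M_2\cdots M_{n-1}\,\mathbf{1}$, where $v_0=(1,x)$, $\mathbf{1}=(1,1)^T$, and
\[
M_i=\begin{pmatrix}1 & x\\1 & x\end{pmatrix}\text{ if }\pi(i)<\pi(i+1),\qquad M_i=\begin{pmatrix}x & x\\1 & 1\end{pmatrix}\text{ if }\pi(i)>\pi(i+1).
\]
The key observation is that each $M_i$ has rank one: setting $a_A=(1,x)$, $a_D=(1,1)$, $b_A=(1,1)^T$, $b_D=(x,1)^T$, one has $M_i=b_{d_i}a_{d_i}$, where $d_i\in\{A,D\}$ records whether $\pi(i)<\pi(i+1)$ or not. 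Since $v_0=a_A$ and $\mathbf{1}=b_A$, the entire product telescopes into $\prod_{i=1}^{n}a_{d_{i-1}}b_{d_i}$, under the boundary conventions $d_0=d_n=A$ (equivalently $\pi(0)=0$ and $\pi(n+1)=+\infty$).

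A direct computation gives $a_{d_{i-1}}b_{d_i}=2x$ at a left peak ($(d_{i-1},d_i)=(A,D)$), $2$ at a valley ($(D,A)$), and $1+x$ at a double ascent $(A,A)$ or double descent $(D,D)$. Since the word $d_0d_1\cdots d_n$ starts and ends with $A$, the number of $AD$-transitions equals the number of $DA$-transitions, so $\pi$ has exactly $\lpk(\pi)$ left peaks, $\lpk(\pi)$ valleys, and $n-2\lpk(\pi)$ double ascents or descents. Therefore
\[
\sum_{w\in C(\pi)}x^{\des_B(w)}=(2x)^{\lpk(\pi)}\cdot 2^{\lpk(\pi)}\cdot(1+x)^{n-2\lpk(\pi)}=4^{\lpk(\pi)}x^{\lpk(\pi)}(1+x)^{n-2\lpk(\pi)},
\]
and summing over $\pi\in\msn$ grouped by $\lpk(\pi)$ yields the theorem.

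The only real subtlety is verifying the rank-one factorization of the $M_i$ and the compatibility of the boundary data $v_0,\mathbf{1}$ with $a_A,b_A$; once this is in place, the balancing $\#(AD)=\#(DA)$ in a word beginning and ending with $A$ automatically produces the matched exponents $2\lpk(\pi)$ in $4^{\lpk(\pi)}$ and $n-2\lpk(\pi)$ in $(1+x)^{n-2\lpk(\pi)}$. An alternative proof via Petersen's own enriched $P$-partition machinery is of course available, but the transfer-matrix route is more elementary and dovetails with the signed statistics developed later in the paper.
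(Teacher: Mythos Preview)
Your proof is correct and self-contained. The paper itself does not give a direct proof of this theorem---it is quoted as Petersen's result obtained via enriched $P$-partitions---but the paper's proof of the $k$-generalization (Theorem~\ref{thm1}) specializes at $k=1$ to a proof of the statement, and that argument is quite different from yours. There one encodes $(\we,\aexc,\cyc)$ on $B_n$ by the context-free grammar $G=\{I\to I(x+y),\ x\to 2kxy,\ y\to 2kxy\}$, then performs the change of variables $u=x+y$, $v=xy$ to obtain $G_1=\{I\to Iu,\ u\to 4kv,\ v\to 2kuv\}$, in which the monomials $4^iv^iu^{n-2i}$ are exactly the $\gamma$-basis; the identification $\sum_j b(n,i,j)=Q(n,i)$ then comes from the Foata--Sch\"utzenberger equidistribution of $\cpk$ and $\lpk$. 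Your route bypasses both the grammar and the cycle structure: fibering $B_n$ over $\msn$ and exploiting the rank-one factorization of the two transfer matrices yields the \emph{class-by-class} identity $\sum_{w\in C(\pi)}x^{\des_B(w)}=4^{\lpk(\pi)}x^{\lpk(\pi)}(1+x)^{n-2\lpk(\pi)}$, which is strictly finer than the theorem and makes the role of $\lpk$ immediate. The trade-off is that the grammar method extends verbatim to arbitrary $k>0$ and simultaneously tracks the cycle refinement, whereas your sign-class computation is tailored to $k=1$ (there is no obvious weight on the sign vectors that produces $k^{n-\cyc(w)}$).
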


In recent years, ordinary and $q$-generalizations of Theorem~\ref{PetersenThm} have been studied by several authors, see~\cite{Lin15,Zeng16,Zhuang17} and references therein.

We now introduce a {\it type $B$ $1/k$-Eulerian polynomials} $B_n^{(k)}(x)$ by using the following generating function
\begin{equation}\label{Bnkx-deff}
\sum_{n=0}^\infty B_n^{(k)}(x)\frac{z^n}{n!}=\left(\frac{(1-x)e^{kz(1-x)}}{1-xe^{2kz(1-x)}}\right)^{\frac{1}{k}}.
\end{equation}
When $k=1$, the polynomial $B_n^{(k)}(x)$ reduces to $B_n(x)$ (see~\cite[Eq.~(14)]{Brenti94}).
Comparing~\eqref{Bnkx-deff} with~\eqref{Ankx-deff}, we see that
$$B_n^{(k)}(x)=\sum_{i=0}^n\binom{n}{i}(2x)^iA_{i}^{(k)}(1/x)(1-x)^{n-i}.$$

This paper is organized as follows.
As a refinement of Theorem~\ref{PetersenThm}, in the next section we show that the polynomials $B_n^{(k)}(x)$ are $\gamma$-positive when $k$ is positive.
In Section~\ref{Section03}, we study a type $B$ $1/k$-derangement polynomials $d_n^B(x;k)$. In particular, in Theorem~\ref{theorem-derange},
we find that $d_n^B(x;1/2)$ is the ascent polynomial of permutations in $\ms_{n+1}$ with no successions.

%
%
%
\section{The $1/k$-Eulerian polynomials of type $B$}\label{Section02}
An element is a {\it left-to-right maximum} of $\pi\in\msn$ if it is larger than or equal to all the elements to its left.
We always assume that $\pi(1)$ is a left-to-right maximum. Let $\lrmax(\pi)$ be the number of left-to-right maxima of $\pi$.
We can write $\pi\in\msn$ in standard cycle decomposition, where each cycle is written with its largest entry first and the cycles are written in increasing order of their largest entry.
A {\it cycle peak} of $\pi$ is an index $i$ such that $\pi^{-1}(i)<i>\pi(i)$. Let $\cpkk(\pi)$ be the number of cycle peaks of $\pi$.
By using the {\it fundamental transformation} of Foata and Sch\"utzenberge~\cite{Foata70}, it is easy to verify that
\begin{equation*}\label{cpklpk}
\sum_{\pi\in\msn}x^{\cpk(\pi)}y^{\cyc(\pi)}=\sum_{\pi\in\msn}x^{\lpk(\pi)}y^{\lrmax(\pi)}.
\end{equation*}

We can now conclude the first main result of this paper.
\begin{theorem}\label{thm1}
For $n\geq 1$, we have
\begin{equation*}
B_n^{(k)}(x)=\sum_{w\in B_n}x^{\we(w)}k^{n-\cyc(w)},
\end{equation*}
and the polynomials $B_n^{(k)}(x)$ satisfy the recurrence relation
\begin{equation}\label{Bnkx-recu}
B_{n+1}^{(k)}(x)=(1+x+2knx)B_n^{(k)}(x)+2kx(1-x)\frac{d}{dx}B_n^{(k)}(x),
\end{equation}
with the initial conditions $B_0^{(k)}(x)=1$ and $B_1^{(k)}(x)=1+x$.
When $k>0$, the polynomial $B_n^{(k)}(x)$ is $\gamma$-positive. More precisely, we have
\begin{equation}\label{Bnk-gamma}
B_n^{(k)}(x)=\sum_{i=0}^{\lrf{n/2}}\left(\sum_{j=i}^{n-1}b(n,i,j)k^j4^i\right)x^i(1+x)^{n-2i},
\end{equation}
where the numbers $b(n,i,j)$ satisfy the recurrence relation
\begin{equation}\label{bnij}
b(n+1,i,j)=b(n,i,j)+2ib(n,i,j-1)+(n-2i+2)b(n,i-1,j-1),
\end{equation}
with $b(1,0,0)=1$ and $b(1,i,j)=0$ for $(i,j)\neq (0,0)$.
Let $$b_n(x,q)=\sum_{i=0}^{\lrf{n/2}}\sum_{j=i}^{n-1}b(n,i,j)x^iq^j.$$
Then
\begin{equation}\label{bnxq-comb}
b_n(x,q)=\sum_{\pi\in\msn}x^{\cpk(\pi)}q^{n-\cyc(\pi)},
\end{equation} and
the generating function of the polynomials $b_n(x,q)$ is given as follows:
$$b(x,q,z)=\sum_{n=0}^\infty b_n(x,q)\frac{z^n}{n!}=\left(\frac{\sqrt{1-x}}{\sqrt{1-x}\cosh(qz\sqrt{1-x})-\sinh(qz\sqrt{1-x})}\right)^{1/q}.$$
\end{theorem}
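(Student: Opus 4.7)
The theorem bundles several intertwined claims---a PDE-style recurrence, a $B_n$-combinatorial formula, a $\gamma$-expansion driven by the auxiliary polynomial $b_n(x,q)$, a hyperbolic EGF, and a cycle-peak/cycle interpretation of $b(n,i,j)$---that are most easily attacked together via the substitution $y=4x/(1+x)^2$. Under it $\sqrt{1-y}=(1-x)/(1+x)$, and the algebraic identity
$$
(1-x)\cosh(kz(1-x))-(1+x)\sinh(kz(1-x))=e^{-kz(1-x)}-xe^{kz(1-x)}
$$
shows that once the claimed hyperbolic EGF for $b_n$ is established, rescaling $z\mapsto z(1+x)$ converts it into the EGF \eqref{Bnkx-deff} of $B_n^{(k)}$. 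This yields $B_n^{(k)}(x)=(1+x)^n\,b_n\!\bigl(\tfrac{4x}{(1+x)^2},k\bigr)$, and expanding in the monomial basis of $b_n$ produces the $\gamma$-expansion \eqref{Bnk-gamma}; nonnegativity of $b(n,i,j)$---manifest from the combinatorial interpretation \eqref{bnxq-comb}---then gives $\gamma$-positivity of $B_n^{(k)}(x)$ for $k>0$.

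For the recurrences \eqref{Bnkx-recu} and \eqref{bnij}, I would translate each into a linear PDE for its EGF. Writing $F(x,z)=\sum B_n^{(k)}(x)z^n/n!$, \eqref{Bnkx-recu} is equivalent to $(1-2kxz)\,\partial_z F=(1+x)F+2kx(1-x)\,\partial_x F$; setting $u=e^{2kz(1-x)}$, logarithmic differentiation of \eqref{Bnkx-deff} gives the compact expression $\partial_z F/F=(1-x)(1+xu)/(1-xu)$ together with a closed form for $\partial_x F/F$, after which the PDE reduces to a short algebraic identity. The analogous PDE for $b(x,q,z)=\sum b_n(x,q)z^n/n!$ coming from \eqref{bnij} is $(1-qxz)\partial_z b=b+2qx(1-x)\partial_x b$, and the hyperbolic expression in the theorem is checked against it by the same technique, using $\partial_z\cosh(qz\sqrt{1-x})=q\sqrt{1-x}\sinh(qz\sqrt{1-x})$.

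The remaining combinatorial claims are proved by matching the recurrences via insertion. For \eqref{bnxq-comb}, set $\hat b_n(x,q):=\sum_\pi x^{\cpk(\pi)}q^{n-\cyc(\pi)}$; given $\pi\in\msn$ and $j\in[n]$, define $\pi'\in\msnn$ by $\pi'(j)=n+1$ and $\pi'(n+1)=\pi(j)$, plus the option to attach $n+1$ as a new fixed point. The element $n+1$ is always a new cycle peak of $\pi'$, while both $j$ and $m=\pi(j)$ lose any cpk status they had (since $\pi'(j)=n+1>j$ and $\pi'^{-1}(m)=n+1>m$). The key observation is that the events ``$j$ is a cpk of $\pi$'' (requires $\pi(j)<j$) and ``$\pi(j)$ is a cpk of $\pi$'' (requires $\pi(j)>j$) are mutually exclusive, so the set $\{\,j\in[n]:j\text{ or }\pi(j)\text{ is a cpk of }\pi\,\}$ has exactly $2\cpk(\pi)$ elements: $2\cpk(\pi)$ insertions leave $\cpk$ invariant (ratio $q$) while the remaining $n-2\cpk(\pi)$ raise it by one (ratio $qx$). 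Summing produces the recurrence $\hat b_{n+1}=(1+qxn)\hat b_n+2qx(1-x)\partial_x\hat b_n$ equivalent to \eqref{bnij}, which establishes \eqref{bnxq-comb} by induction. The $B_n$ formula $B_n^{(k)}(x)=\sum_w x^{\we(w)}k^{n-\cyc(w)}$ is then proved by an analogous signed insertion: each $w\in B_{n+1}$ comes from $w'\in B_n$ via one of $2n+2$ canonical moves---attaching $\pm(n+1)$ as a new singleton cycle (contributing the factor $(1+x)$ of \eqref{Bnkx-recu}) or splicing $\pm(n+1)$ into an existing cycle at one of $n$ positions---and a signed version of the peak-predecessor bookkeeping, tracking how $\we$ changes at $n+1$, at $j$, and at $|w(j)|$, recovers the $2knx+2kx(1-x)d/dx$ part.

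The main obstacle is this signed insertion for $B_n$: compared with the clean $\msn$ calculation, one must simultaneously manage the sign of the inserted letter and the interaction with the signed cycle structure (paired vs.\ self-paired orbits on $\pm[n]$), and verify that the factor of $2$ in $2knx$ emerges from a signed analog of the peak-predecessor pairing on $\msn$. The hyperbolic EGF verification in the second paragraph is routine but algebraically delicate and easy to miscompute.
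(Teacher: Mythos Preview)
Your outline is correct and the key identities check out, but it is organized differently from the paper. The paper does not argue via the substitution $y=4x/(1+x)^2$ at the level of generating functions; instead it introduces a context-free grammar $G=\{I\to I(x+y),\,x\to 2kxy,\,y\to 2kxy\}$ and proves, by a grammatical labeling that is exactly the signed-insertion bookkeeping you describe, that $D_G^n(I)=I\sum_{w\in B_n}x^{\we(w)}y^{\aexc(w)}k^{n-\cyc(w)}$. The EGF and the recurrence~\eqref{Bnkx-recu} then fall out of $D_G^{n+1}(I)=D_G(I\,F_n)$. For the $\gamma$-expansion the paper performs the change of variables $u=x+y$, $v=xy$ inside the grammar, obtaining $G_1=\{I\to Iu,\,u\to 4kv,\,v\to 2kuv\}$, and reads off~\eqref{Bnk-gamma} and~\eqref{bnij} directly from $D_{G_1}^n(I)$. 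Your EGF substitution is the analytic shadow of this algebraic change of variables: $4v/u^2=4xy/(x+y)^2$ specializes to $4x/(1+x)^2$ at $y=1$.

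What each approach buys: your route is more elementary in that it never names grammars, and the single hyperbolic-to-exponential identity you display is a clean replacement for the paper's two-step passage (homogeneous EGF in Lemma~\ref{lemma-Bnkx-EGF}, then specialization). On the other hand, the paper's grammar framework dissolves precisely the ``main obstacle'' you flag: the signed insertion on $B_n$ with its interaction between the sign of $\pm(n{+}1)$ and the weak-excedance bookkeeping is handled uniformly by the labeling rules, so the factor $2$ in $2knx$ is forced by the rule $x\to 2kxy$, $y\to 2kxy$ rather than by an ad hoc pairing argument. Your insertion proof of~\eqref{bnxq-comb} is essentially the paper's, and your PDE verification of the hyperbolic EGF is identical to what the paper does.
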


Let $w\in B_n$.
An element $w(i)$ is a {\it left-to-right absolute maximum} of $w$ if $|w(i)|>|w(j)|$ for every $j\in [i-1]$ or $i=1$.
Let $\Lramax(w)$ denote the set of left-to-right absolute maxima of $w$, and let $\lramax(w)=\#\Lramax(w)$.
As usual, we denote by $\overline{i}$ the negative element $-i$.
For example, $\lramax\{\overline{2}~\overline{4}15\overline{3}\}=|\{\overline{2},\overline{4},5\}|=3$.
Let $\asc_B(w)=\#\{i\in\{0,1,\ldots,n-1\}\mid w(i)<w({i+1})\}$, where $w(0)=0$.
We can write $w$ by its standard cycle decomposition, in which each cycle has its largest (in absolute value) element first and the cycles are written in increasing order of the absolute value of their first elements. Let $\cyc(w)$ be the number of cycles of $w$.
Let $\tau(w)$ be word obtained by deleting all parentheses in the standard cycle decomposition of $w$. It is clear that $\tau$ is a bijection.
From the proof of~\cite[Theorem~3.15]{Brenti94}, we see that
\begin{equation*}
\sum_{w\in B_n}x^{\we(w)}y^{\cyc(w)}=\sum_{w\in B_n}x^{\asc_B(w)}y^{\lramax(w)}.
\end{equation*}
Therefore, another combinatorial interpretation of $B_n^{(k)}(x)$ is given as follows:
\begin{equation*}
B_n^{(k)}(x)=\sum_{w\in B_n}x^{\asc_B(w)}k^{n-\lramax(w)}.
\end{equation*}

In the rest of this section, we shall prove Theorem~\ref{thm1}.
Let $V$ be an alphabet whose letters are regarded as independent commutative
indeterminates.
A {\it context-free grammar} $G$  over $V$ is a set of substitution rules replacing a variable in $V$ by
a Laurent polynomial of variables in $V$,
see~\cite{Chen17,Ma19} for details.
The formal derivative $D:=D_G$ with respect to $G$ is defined as a linear operator acting on Laurent polynomials with variables in $V$
such that each substitution rule is treated as the common differential rule that satisfies the relations. We have $D(c)=0$ for a constant $c$,
$D(u+v)=D(u)+D(v)$ and $D(uv)=D(u)v+uD(v)$.
Following~\cite{Chen17}, a {\it grammatical labeling} is an assignment of the underlying elements of a combinatorial structure
with variables, which is consistent with the substitution rules of a grammar.

In the following discussion, we always write $w\in B_n$ by its standard cycle decomposition.
For $w\in B_n$, we say that $i\in [n]$ is an {\it anti-excedance} of $w$ if $w(i)=\overline{i}$ or $w(|w(i)|)<w(i)$. Let
Let $\aexc(w)$ be the number of anti-excedances of $w$.
It is clear that $\we(w)+\aexc(w)=n$ for $w\in B_n$.
The following lemma is fundamental.
\begin{lemma}\label{lemma-Bnkx}
Let $G=\{I\rightarrow I(x+y),~x\rightarrow 2kxy,~y\rightarrow 2kxy\}$.
We have
\begin{equation}\label{DnI-grammar}
D_G^n(I)=I\sum_{w\in B_n}x^{\we(w)}y^{\aexc(w)}k^{n-\cyc(w)}.
\end{equation}
\end{lemma}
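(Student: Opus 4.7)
The plan is to proceed by induction on $n$, using a grammatical labeling in the spirit of Chen~\cite{Chen17} and Ma~\cite{Ma19}. The base case $n=0$ is immediate: $D_G^0(I)=I$ and $B_0$ contains only the empty permutation with trivial statistics.

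For the inductive step, I would represent each $w\in B_n$ by the word $\tau(w)=w_1\cdots w_n$ obtained by deleting parentheses from its standard cycle decomposition, and label position $p$ of $\tau(w)$ by $x$ if $w_{p-1}<w_p$ (with $w_0=0$), by $y$ otherwise, while attaching an extra factor $k$ whenever $w_p$ is not a left-to-right absolute maximum. By the equidistributions $\asc_B(\tau(w))=\we(w)$ and $\lramax(\tau(w))=\cyc(w)$ recorded earlier in the excerpt, the weight $I\cdot\prod_p(\text{label}_p)$ equals $I\cdot x^{\we(w)}y^{\aexc(w)}k^{n-\cyc(w)}$, matching the summand of the RHS.

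Next I would enumerate the $2n+2$ insertion moves producing every child $w'\in B_{n+1}$ from its unique parent $w$ (recovered by deleting $n+1$), and show each matches a substitution of $G$. Two ``append'' moves place $\pm(n+1)$ as a new singleton cycle at the end of $\tau(w)$, carrying label $x$ or $y$ with no $k$, and together realize $I\to I(x+y)$. The other $2n$ moves insert $\pm(n+1)$ into an existing cycle: for each position $p\in[n]$ of $\tau(w)$ and each sign choice $\eta\in\{\pm\}$, one inserts $\eta(n+1)$ at the chosen signed position in the $\pm[n]$-cycle of $\eta w_p$ and displays the new cycle (rotated so that $\eta(n+1)$ leads) as the last cycle of $\tau(w')$. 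The claim is that this transforms the weight of $w$ by the substitution $x\to kxy$ or $y\to kxy$ on a specific letter of the monomial, with the two sign choices summing to the factor $2kxy$ in the rules $x\to 2kxy$ and $y\to 2kxy$.

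The main obstacle will be the local verification in the last step: the cyclic rotation of a length-$\ell$ cycle shuffles which position is read before which in $\tau(w')$, so the ascent/descent labels of the rotated positions must be re-examined relative to their new predecessors. Moreover, a careful choice of sign convention for the displayed representative of the new merged cycle (which can depend on $\eta$, on the parity of $\ell$, and on the sign of $w_p$) is needed to ensure that every label of the original cycle is either preserved or flipped in exactly one place, producing the required multiplicative change $kxy/\text{label}_p$ and nothing else. Once this case analysis is in place, summing over all moves at all $w\in B_n$ gives $D_G(I T_n)=I T_{n+1}$ with $T_n:=\sum_{w\in B_n}x^{\we(w)}y^{\aexc(w)}k^{n-\cyc(w)}$, closing the induction.
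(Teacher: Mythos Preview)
Your overall strategy---grammatical labeling plus induction on $n$---matches the paper's, but you take a detour that creates the very obstacle you flag. The paper does \emph{not} pass to the linear word $\tau(w)$; it labels each element $i$ of $w$ directly by $x$ or $y$ according to whether $i$ is a weak excedance or an anti-excedance, and places a $k$ before every non-leading element of each cycle. Because the weak-excedance status of an element is a cycle-local datum, rewriting the standard cycle decomposition after inserting $n$ or $\overline{n}$ does not disturb labels away from the insertion point, and the paper's case analysis is short and explicit: the two sign choices at a position labeled $x$ (respectively $y$) together realize $x\to 2kxy$ (respectively $y\to 2kxy$).

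By linearizing to $\tau(w)$ and labeling by ascents you make the labels depend on adjacency in the word, so the rotation and repositioning of the target cycle could in principle affect several labels at once. You name this as ``the main obstacle'' but do not carry out the verification; as written, the crucial inductive step is an unproved claim, and your remark that ``a careful choice of sign convention \ldots\ is needed'' signals that you have not yet found the right bookkeeping. In fact the obstacle dissolves once you observe that if the old cycle reads $M,a_1,\ldots,a_{\ell-1}$ in $\tau(w)$ and $n{+}1$ is inserted just before $a_p$, then in $\tau(w')$ the cycle reads $n{+}1,a_p,\ldots,a_{\ell-1},M,a_1,\ldots,a_{p-1}$: every $a_j$ with $j\ne p$ keeps its former left neighbour, and the label of $M$ is governed solely by the sign of $M$ both before and after (since $|M|$ dominates its left neighbour in either word). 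Hence only the label at $a_p$ can change, $n{+}1$ acquires a label, and $M$ acquires a factor $k$, giving the required multiplicative change. So your route can be completed, but the paper's direct excedance labeling reaches the same conclusion without ever introducing the difficulty.
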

\begin{proof}
We first introduce a grammatical labeling of $w\in B_{n}$ as follows:
\begin{itemize}
  \item [\rm ($L_1$)]If $i$ is a weak excedance, then put a superscript label $x$ right after $i$;
 \item [\rm ($L_2$)]If $i$ is an anti-excedance, then put a superscript label $y$ right after $i$;
\item [\rm ($L_3$)] Put a subscript label $k$ just before every element of $w$ except the first element in each cycle;
\item [\rm ($L_4$)]Put a subscript label $I$ right after $w$.
\end{itemize}
The weight of $w$ is the product of its labels.
Note that the weight of $w$ is given by $$Ix^{\exc(w)}y^{\aexc(w)}k^{n-\cyc(w)}.$$
Every permutation in $B_n$ can be obtained from a permutation in $B_{n-1}$ by inserting $n$ or $\overline{n}$.
For $n=1$, we have $B_1=\{(1^z)_I,(\overline{1}^y)_I\}$.
Note that $D_G(I)=I(x+y)$.
Then the sum of weights of the elements in $B_1$ is given by $D_G(I)$.
Hence the result holds for $n=1$.
We proceed by induction on $n$.
Suppose we get all labeled permutations in $w\in B_{n-1}$, where $n\geq 2$. Let
$\widetilde{w}$ be obtained from $w\in B_{n-1}$ by inserting $n$ or $\overline{n}$.
When the inserted $n$ or $\overline{n}$ forms a new cycle, the insertion corresponds to the substitution rule $I\rightarrow I(x+y)$.
If $i$ is a weak excedance of $w$, the changes of labeling are illustrated as follows:
$$\cdots(i^x)\cdots\mapsto \cdots(n^y_ki^x)\cdots;~~\cdots(i^x)\cdots\mapsto \cdots(\overline{n}^x_ki^y)\cdots;$$
$$\cdots(\cdots w(i)^x_k w(i+1)\cdots w(j))\cdots\mapsto \cdots(n^y_k w(i+1)\cdots w(j)_k\cdots w(i)^x)\cdots;$$
$$\cdots(\cdots w(i)^x_k w(i+1)\cdots w(j))\cdots\mapsto \cdots(\overline{n}^x_k w(i+1)\cdots w(j)_k\cdots w(i)^y)\cdots.$$
If $i$ is an anti-excedance of $w$, the changes of labeling are illustrated as follows:
$$\cdots(\overline{i}^y)\cdots\mapsto \cdots(n^y_k\overline{i}^x)\cdots;~~\cdots(\overline{i}^y)\cdots\mapsto \cdots(\overline{n}^x_k\overline{i}^y)\cdots;$$
$$\cdots(\cdots w(i)^y_k w(i+1)\cdots w(j))\cdots\mapsto \cdots(n^y_k w(i+1)\cdots w(j)_k\cdots w(i)^x)\cdots;$$
$$\cdots(\cdots w(i)^y_k w(i+1)\cdots w(j))\cdots\mapsto \cdots(\overline{n}^x_k w(i+1)\cdots w(j)_k\cdots w(i)^y)\cdots.$$
In each case, the insertion of $n$ or $\overline{n}$ corresponds to the substitution rule $x\rightarrow 2kxy$ or $y\rightarrow 2kxy$.
it is routine to check that the action of $D_G$ on elements of $w\in B_{n-1}$ generates all elements of $\widetilde{w}\in B_n$.
\end{proof}

Let $$F_{n}(x,y;k)=\sum_{w\in B_n}x^{\we(w)}y^{\aexc(w)}k^{n-\cyc(w)}.$$
\begin{lemma}\label{lemma-Bnkx-EGF}
We have
$$F(x,y;k)=\sum_{n=0}^\infty F_{n}(x,y;k)\frac{z^n}{n!}=\left(\frac{(y-x)e^{kz(y-x)}}{y-xe^{2kz(y-x)}}\right)^{1/k}.$$
\end{lemma}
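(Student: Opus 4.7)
The plan is to convert the grammatical recursion $D_G^n(I) = I\cdot F_n(x,y;k)$ from Lemma~\ref{lemma-Bnkx} into a closed-form exponential generating function by exploiting the derivation property of $D_G$.

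The key observation I would make at the outset is that $D_G(y-x) = 2kxy - 2kxy = 0$, so $c := y-x$ is a \emph{constant of motion} under $D_G$. Introducing the complementary variable $s := x+y$, a short computation gives $D_G(s) = 4kxy = k(s^2-c^2)$, and of course $D_G(I) = Is$. This is the entire structural input.

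Since $D_G$ is a derivation, $e^{zD_G}$ is a ring homomorphism on the polynomial algebra generated by $I,x,y$. I would therefore set $S(z) := e^{zD_G}(s)$ and $T(z) := e^{zD_G}(I) = \sum_{n\geq 0} D_G^n(I)\, z^n/n!$; by the homomorphism property these satisfy the coupled system
\begin{align*}
S'(z) &= k(S(z)^2 - c^2), & S(0) &= x+y,\\
T'(z) &= T(z)\,S(z),       & T(0) &= I.
\end{align*}
The first equation is a scalar Riccati ODE with $c$ as a fixed parameter; separating variables via $\frac{1}{S^2-c^2} = \frac{1}{2c}\bigl(\frac{1}{S-c}-\frac{1}{S+c}\bigr)$ and using $s-c = 2x$, $s+c=2y$ yields $\frac{S(z)-c}{S(z)+c} = \frac{x}{y}\,e^{2kcz}$.

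With $S(z)$ in hand I would integrate the second equation, $T(z) = I \exp\!\bigl(\int_0^z S(u)\,du\bigr)$. Writing $S = c(1+\alpha)/(1-\alpha)$ with $\alpha(u) = (x/y)e^{2kcu}$ and changing variables to $\alpha$, the partial-fraction identity $\frac{1+\alpha}{\alpha(1-\alpha)} = \frac{1}{\alpha}+\frac{2}{1-\alpha}$ makes the antiderivative elementary; reassembling and substituting $c = y-x$ gives $\int_0^z S(u)\,du = \frac{1}{k}\ln\!\frac{(y-x)e^{kz(y-x)}}{y-xe^{2kz(y-x)}}$, and dividing $T(z)$ by $I$ delivers the stated formula for $F(x,y;k)$. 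The only real obstacle is bookkeeping in this final integration and checking that the initial condition $F(x,y;k)|_{z=0}=1$ selects the correct branch of the $1/k$-th power, which is automatic from the definite-integral form; the conceptual work is done the moment one notices $D_G(y-x)=0$.
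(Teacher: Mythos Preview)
Your proof is correct but follows a genuinely different route from the paper's. The paper's argument is a \emph{verification}: from $D_G^{n+1}(I)=D_G(IF_n)$ it extracts the recurrence
\[
F_{n+1}=(x+y)F_n+2kxy\Bigl(\tfrac{\partial}{\partial x}+\tfrac{\partial}{\partial y}\Bigr)F_n,
\]
rewrites this as the PDE $\partial_z F=(x+y)F+2kxy(\partial_x+\partial_y)F$, and then simply checks that the announced closed form satisfies this PDE together with the initial condition $F|_{z=0}=1$. No attempt is made to \emph{discover} the formula.

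Your approach, by contrast, is constructive. The observation $D_G(y-x)=0$ collapses the two-variable dynamics to a one-parameter family of ODEs in $z$: a Riccati equation for $S=e^{zD_G}(x+y)$ and then a linear equation for $T=e^{zD_G}(I)$, both of which you integrate in closed form. This is more illuminating---it explains \emph{why} the answer has the shape it does and would work even if the formula were not known in advance---whereas the paper's method is shorter once the answer is in hand. The only point where a careful reader might want an extra word is the invocation of ``$e^{zD_G}$ is a ring homomorphism'': this is standard for derivations on a commutative algebra (it encodes iterated Leibniz), and together with $D_G(c)=0$ it is exactly what lets you pass from $D_G(s)=k(s^2-c^2)$ to $S'=k(S^2-c^2)$ with $c$ treated as a genuine constant.
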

\begin{proof}
By using Lemma~\ref{lemma-Bnkx}, we get $D_G^{n}(I)=IF_{n}(x,y;k)$.
It follows from $D_G^{n+1}(I)=D_G(IF_{n}(x,y;k))$ that
\begin{equation}\label{recu-Fnkxy}
F_{n+1}(x,y;k)=(x+y)F_{n}(x,y;k)+2kxy\left(\frac{\partial}{\partial x}+\frac{\partial}{\partial y}\right)F_{n}(x,y;k),
\end{equation}
with the initial conditions $F_{0}(x,y;k)=1$ and $F_{1}(x,y;k)=x+y$.
By rewriting~\eqref{recu-Fnkxy} in terms of generating function $F:=F(x,y;k)$, we have
\begin{equation}\label{recu-Fnkxy02}
\frac{\partial}{\partial z}F=(x+y)F+2kxy\left(\frac{\partial}{\partial x}+\frac{\partial}{\partial y}\right)F.
\end{equation}
It is routine to check that the generating function
$$\widetilde{F}=\left(\frac{(y-x)e^{kz(y-x)}}{y-xe^{2kz(y-x)}}\right)^{1/k}$$
satisfies~\eqref{recu-Fnkxy02}. Also, this generating function gives $\widetilde{F}(0,0;k)=1$. Hence $F=\widetilde{F}$.
\end{proof}

\noindent{\bf A proof of Theorem~\ref{thm1}:}
\begin{proof}
Combining~\eqref{Bnkx-deff} and Lemma~\ref{lemma-Bnkx-EGF}, we obtain
\begin{equation}\label{FxyzBxyz}
F_{n}(x,y;k)=y^nB_n^{(k)}\left(\frac{x}{y}\right).
\end{equation}
Therefore,
\begin{equation*}
B_n^{(k)}(x)=\sum_{w\in B_n}x^{\we(w)}k^{n-\cyc(w)}.
\end{equation*}
Combining~\eqref{recu-Fnkxy} and~\eqref{FxyzBxyz}, it is routine to verify~\eqref{Bnkx-recu}.
We now consider a change of the grammar $G$ given in Lemma~\ref{lemma-Bnkx}.
Setting $u=x+y$ and $v=xy$, we get
$D(I)=Iu,~D(u)=4kv$ and $D(v)=2kuv$. If $V=\{I,u,v\}$ and
\begin{equation}\label{G1grammar}
G_1=\{I\rightarrow Iu,~u\rightarrow 4kv,~v\rightarrow 2kuv\},
\end{equation}
Note that $D_{G_1}(I)=Iu,~D_{G_1}^2(I)=I(u^2+4kv),~
D_{G_1}^3(I)=I(u^3+(12k+8k^2)uv)$ and
$D_{G_1}^4(I)=I(u^4+(24k+32k^2+16k^3)u^2v+(48k^2+32k^3)v^2)$.
By induction, it is routine to verify that
$$D_{G_1}^n(I)=I\sum_{i=0}^{\lrf{n/2}}\sum_{j=i}^{n-1}b(n,i,j)k^j4^iv^iu^{n-2i}.$$
Then upon taking $u=x+y$ and $v=xy$, we get~\eqref{Bnk-gamma}.
Note that
\begin{align*}
D_{G_1}^{n+1}(I)&=D_{G_1}\left(I\sum_{i=0}^{\lrf{n/2}}\sum_{j=i}^{n-1}b(n,i,j)k^j4^iv^iu^{n-2i}\right)\\
&=\sum_{i,j}b(n,i,j)k^j4^iv^i\left(u^{n-2i+1}+2kiu^{n-2i+1}+4k(n-2i)vu^{n-2i-1}\right).
\end{align*}
Equating the coefficients of $k^j4^iv^iu^{n+1-2i}$ in both sides of the above expression,
we get~\eqref{bnij}. Multiplying both sides of~\eqref{bnij} by $x^iq^j$, we obtain
\begin{equation}\label{bnxq-recu}
b_{n+1}(x,q)=(1+nqx)b_n(x,q)+2qx(1-x)\frac{\partial}{\partial x}b_n(x,q).
\end{equation}
In particular, $b_0(x,q)=b_1(x,q)=1$.

Assume that~\eqref{bnxq-comb} holds.
Given $\sigma\in\msn$.
Let $\sigma_i$ be an element of $\ms_{n+1}$ obtained from $\sigma$ by inserting the entry
$n+1$ right after $i$ if $i\in [n]$ or as a new cycle $(n+1)$ if $i=n+1$.
It is clear that
$$ \cyc(\sigma_i)=\left\{
              \begin{array}{ll}
                \cyc(\sigma), & \hbox{if $i\in [n]$;} \\
                \cyc(\sigma)+1, & \hbox{if $i=n+1$.}
              \end{array}
            \right.
$$
Therefore, we have
\begin{align*}
&{b}_{n+1}(x,q)\\
&=\Sigma_{\pi\in\ms_{n+1}}x^{\cpk(\pi)}q^{n+1-\cyc(\pi)}\\
  &=\Sigma_{i=1}^{n+1}\Sigma_{\sigma\in\msn}x^{\cpk(\sigma_i)}q^{n+1-\cyc(\sigma_i)}\\
  &=\Sigma_{\sigma\in\msn}x^{\cpk(\sigma)}q^{n-\cyc(\sigma)}+
\Sigma_{i=1}^{n}\Sigma_{\sigma\in\msn}x^{\cpk(\sigma_i)}q^{n+1-\cyc(\sigma)}\\
  &={b}_{n}(x,q)+\Sigma_{\sigma\in\msn}\left(2\cpk(\sigma)x^{\cpk(\sigma)}+(n-2\cpk(\sigma))x^{\cpk(\sigma)+1}\right)q^{n+1-\cyc(\sigma)}\\
  &={b}_{n}(x,q)+nqx{b}_{n}(x,q)+2q(1-x)\Sigma_{\pi\in\msn}\cpk(\sigma)x^{\cpk(\sigma)}q^{n-\cyc(\sigma)},
\end{align*}
and~\eqref{bnxq-recu} follows, as desired.
Set $b=b(x,q,z)$.
By rewritten~\eqref{bnxq-recu} in terms of $b$, we have
\begin{equation}\label{Q-recu}
(1-qxz)\frac{\partial b}{\partial z}=b+2qx(1-x)\frac{\partial b}{\partial x}.
\end{equation}
It is routine to verify that
$$\widetilde{b}(x,q,z)=\left(\frac{\sqrt{1-x}}{\sqrt{1-x}\cosh(qz\sqrt{1-x})-\sinh(qz\sqrt{1-x})}\right)^{1/q}$$
satisfies~\eqref{Q-recu}. Also, this generating function gives $\widetilde{b}(x,q,0)=1$ and $\widetilde{b}(0,q,z)=e^{z}$.
Hence $\widetilde{b}(x,q,z)=b(x,q,z)$.
This completes the proof.
\end{proof}

When $q=1$, the generating function $b(x,q,z)$ reduces to the the generating function of the polynomials $\sum_{i=0}^{\lrf{n/2}}Q(n,i)x^i$,
which is due to Gessel~\cite[A008971]{Sloane}. Thus the polynomial $b_n(x,q)$ can be called the {\it $1/q$-left peak polynomial}.
From the explicit formula of $b(x,q,z)$, it is routine to verify the following result.
\begin{corollary}
For $n\geq 1$, we have $$\sum_{\pi\in\msn}x^{\cpk(\pi)}(-1)^{n-\cyc(\pi)}=(1-x)^{\lrf{n/2}}.$$
\end{corollary}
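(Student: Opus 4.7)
The plan is to specialize the generating function $b(x,q,z)$ established in Theorem~\ref{thm1} at $q=-1$ and read off the coefficient of $z^n/n!$. Since $\cyc(\pi)\leq n$, the sum $\sum_{\pi\in\msn}x^{\cpk(\pi)}(-1)^{n-\cyc(\pi)}$ is exactly $b_n(x,-1)$ by~\eqref{bnxq-comb}, so the corollary reduces to evaluating the generating function at $q=-1$ and extracting coefficients.

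Setting $q=-1$, the exterior exponent $1/q$ becomes $-1$, which inverts the big fraction; inside, $\cosh(-z\sqrt{1-x})=\cosh(z\sqrt{1-x})$ and $\sinh(-z\sqrt{1-x})=-\sinh(z\sqrt{1-x})$. After these simplifications the expression collapses to
$$b(x,-1,z)=\cosh(z\sqrt{1-x})+\frac{\sinh(z\sqrt{1-x})}{\sqrt{1-x}},$$
and the factors of $\sqrt{1-x}$ pair up so that only integer powers of $(1-x)$ appear in the series.

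The last step is to expand. The even-indexed contribution is $\sum_{m\geq 0}(1-x)^m z^{2m}/(2m)!$ and the odd-indexed contribution is $\sum_{m\geq 0}(1-x)^m z^{2m+1}/(2m+1)!$. Extracting the coefficient of $z^n/n!$ yields $(1-x)^m$ whether $n=2m$ or $n=2m+1$, which in both cases equals $(1-x)^{\lfloor n/2\rfloor}$, as claimed.

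I do not expect a genuine obstacle: the only thing to verify is that $b(x,q,z)^{1/q}$ at $q=-1$ is a well-defined formal power series in $z$ (it is, since its constant term in $z$ is $1$), after which the argument is a direct substitution followed by series expansion. An alternative route would be to substitute $q=-1$ directly into the recurrence~\eqref{bnxq-recu} and induct, but the generating function approach is cleaner and makes the parity-splitting transparent.
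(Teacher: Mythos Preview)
Your proof is correct and follows exactly the approach the paper intends: the paper simply says the corollary is ``routine to verify'' from the explicit formula for $b(x,q,z)$, and you have carried out precisely that verification by specializing $q=-1$, simplifying via the parity of $\cosh$ and $\sinh$, and reading off the coefficient of $z^n/n!$.
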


Let $$T(n,i)=\frac{n!}{i!(n-2i)!2^i}$$
be the {\it Bessel number}, which is the number of involutions of $[n]$ with $i$ pairs, see~\cite{Choi03} for instance.
In other words, the number $T(n,i)$ counts the number of involutions of $[n]$ with $n-i$ cycles.
Note that
$b(n,i,i)=\#\{\pi\in\msn: \cpk(\pi)=i, \cyc(\pi)=n-i\}$.
So the following corollary is immediate.
\begin{corollary}
For $0\leq i\leq \lrf{n/2}$,
we have $b(n,i,i)=T(n,i)$.
\end{corollary}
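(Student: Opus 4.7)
The plan is to invoke the combinatorial interpretation from Theorem~\ref{thm1}, equation~\eqref{bnxq-comb}, which gives
\[
b(n,i,i) = \#\{\pi \in \msn : \cpk(\pi) = i,\ \cyc(\pi) = n - i\},
\]
and then to show that this set consists precisely of the involutions of $[n]$ having $i$ two-cycles and $n - 2i$ fixed points, which are counted by $T(n,i)$.

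First I would dispose of the easy inclusion: an involution with $i$ pairs has exactly $i + (n-2i) = n - i$ cycles, and each two-cycle $(b,a)$ with $a<b$ contributes exactly one cycle peak (namely $b$, since $\pi^{-1}(b) = a < b$ and $\pi(b) = a < b$), while fixed points contribute none; hence $\cpk(\pi) = i$ and $\cyc(\pi) = n - i$.

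The main content is the reverse inclusion, which rests on a single cycle-by-cycle inequality. For a cycle $C$ of length $\ell$, cycle peaks correspond to cyclic local maxima in the cyclic arrangement of the cycle's entries, and these must alternate with cyclic local minima, so $\cpk(C) \leq \lfloor \ell/2 \rfloor$. Equivalently,
\[
(\ell - 1) - \cpk(C) \;\geq\; \lceil \ell/2 \rceil - 1 \;\geq\; 0,
\]
with the outer inequality an equality exactly when $\ell \in \{1,2\}$. Summing over the $n - i$ cycles $C_1,\ldots,C_{n-i}$ of any $\pi$ contributing to $b(n,i,i)$, the hypotheses $\sum_j \ell_j = n$ and $\sum_j \cpk(C_j) = i$ yield
\[
\sum_{j=1}^{n-i}\bigl((\ell_j - 1) - \cpk(C_j)\bigr) \;=\; \bigl(n - (n-i)\bigr) - i \;=\; 0,
\]
which forces every summand to vanish. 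Hence every cycle of $\pi$ has length $1$ or $2$, so $\pi$ is an involution, and the identity $\sum (\ell_j - 1) = i$ pins down the number of two-cycles as exactly $i$.

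The main obstacle is the cycle-peak bound $\cpk(C) \leq \lfloor \ell/2 \rfloor$: one has to unwind the definition of cycle peak (stated using the standard cycle notation with the largest entry first) and re-express it as cyclic peakedness in the underlying cyclic word of the cycle, so that the alternation of cyclic peaks and valleys applies. Once this reduction is in place, the rest is the one-line arithmetic identity above, and the conclusion $b(n,i,i) = T(n,i)$ follows.
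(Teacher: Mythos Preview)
Your proof is correct and follows the same approach as the paper: both use the combinatorial interpretation \eqref{bnxq-comb} to identify $b(n,i,i)$ with $\#\{\pi\in\msn:\cpk(\pi)=i,\ \cyc(\pi)=n-i\}$. The paper simply declares the identification with involutions having $i$ pairs to be ``immediate,'' whereas you supply the cycle-by-cycle inequality $\cpk(C)\le\lfloor\ell/2\rfloor$ and the equality analysis that justifies it.
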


Recall that the numbers $P(n,i)$ satisfy the recurrence relation
\begin{equation}\label{Pni-recu}
P(n,i)=(2i+2)P(n-1,i)+(n-2i)P(n-1,i-1),
\end{equation}
with the initial condition $P(1,0)=1$ and $P(1,i)=0$ for $i\neq 0$ (see~\cite[A008303]{Sloane}).
It follows from~\eqref{bnij} that
\begin{equation}\label{bni-recu}
b(n+1,i,n)=2ib(n,i,n-1)+(n-2i+2)b(n,i-1,n-1).
\end{equation}
Comparing~\eqref{Pni-recu} with~\eqref{bni-recu}, we immediately get the following corollary.
\begin{corollary}
For $0\leq i\leq \lrf{(n-1)/2}$, we have
$b(n+1,i+1,n)=P(n,i)$.
\end{corollary}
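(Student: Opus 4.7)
The plan is to prove this by induction on $n$, using nothing more than the recurrence~\eqref{bni-recu} already derived from~\eqref{bnij} and the recurrence~\eqref{Pni-recu} defining $P(n,i)$. Since the excerpt has essentially set up the two recurrences side by side, the argument amounts to re-indexing one of them and matching initial conditions.

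First, I would define $a(n,i) := b(n+1, i+1, n)$ for $0 \leq i \leq \lfloor (n-1)/2 \rfloor$ and rewrite~\eqref{bni-recu} with $i$ replaced by $i+1$. This gives
\begin{equation*}
b(n+1, i+1, n) = 2(i+1)\, b(n, i+1, n-1) + \bigl(n - 2(i+1) + 2\bigr)\, b(n, i, n-1),
\end{equation*}
which, using the definition of $a$ (namely $b(n, i+1, n-1) = a(n-1, i)$ and $b(n, i, n-1) = a(n-1, i-1)$), becomes
\begin{equation*}
a(n, i) = (2i+2)\, a(n-1, i) + (n - 2i)\, a(n-1, i-1).
\end{equation*}
This is identical to the recurrence~\eqref{Pni-recu} satisfied by $P(n,i)$.

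Next I would verify the base case $n=1$. Using~\eqref{bnij} with $(n,i,j) = (1,1,1)$, and recalling that $b(1,0,0) = 1$ while $b(1,i,j) = 0$ otherwise, we obtain
\begin{equation*}
b(2, 1, 1) = b(1, 1, 1) + 2\, b(1, 1, 0) + (1 - 2 + 2)\, b(1, 0, 0) = 0 + 0 + 1 = 1 = P(1, 0).
\end{equation*}
Hence $a(1, 0) = P(1, 0)$, and $a(1, i) = 0 = P(1, i)$ for $i \neq 0$ follows from~\eqref{bnij} in the same way.

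Since $a(n,i)$ and $P(n,i)$ obey the same recurrence with the same initial values, induction on $n$ yields $a(n,i) = P(n,i)$, which is the desired identity. There is essentially no obstacle here: the only thing to be careful about is the index shift in going from~\eqref{bni-recu} to the recurrence for $a$, and checking that the boundary terms (when $i = \lfloor (n-1)/2 \rfloor$ and $b(n, i+1, n-1)$ might vanish) behave consistently on both sides, which they do because the same vanishing conditions hold for $P$.
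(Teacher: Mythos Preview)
Your proposal is correct and follows exactly the same approach as the paper: the paper simply says ``Comparing~\eqref{Pni-recu} with~\eqref{bni-recu}, we immediately get the following corollary,'' and you have spelled out that comparison in full detail (the index shift $i\to i+1$, the matching of the recurrences, and the verification of the initial conditions). There is nothing substantively different between the two arguments.
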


Following Dumont~\cite{Dumont96}, if $G_3=\{x\rightarrow xy, y\rightarrow xy\}$,
then for $n\geq 1$, we have
\begin{equation}\label{G3-Dumont}
D_{G_3}^n(x)=x\sum_{\pi\in\msn}x^{\exc(\pi)}y^{n-\exc(\pi)}.
\end{equation}
We end this section by giving the following result.
\begin{proposition}
For $n\geq 1$, we have
\begin{equation}\label{Bnkx-recu}
B_{n+1}^{(k)}(x)=(1+x)B_{n}^{(k)}(x)+x\sum_{i=0}^{n-1}\binom{n}{i}2^{n+1-i}k^{n-i}B_{i}^{(k)}(x)A_{n-i}(x).
\end{equation}
\end{proposition}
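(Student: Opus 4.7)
The plan is to convert the claimed identity into an ordinary differential equation for the exponential generating function of $\{B_n^{(k)}(x)\}$, and then check that this ODE follows from the closed forms in \eqref{Bnkx-deff} together with the classical Eulerian generating function. Set
\[
B(z)=\sum_{n\ge 0}B_n^{(k)}(x)\frac{z^n}{n!},\qquad A(z)=\sum_{n\ge 0}A_n(x)\frac{z^n}{n!}=\frac{1-x}{e^{z(x-1)}-x}.
\]
Since $2^{n+1-i}k^{n-i}=2\cdot(2k)^{n-i}$, a Cauchy product gives
\[
\sum_{n\ge 1}\sum_{i=0}^{n-1}\binom{n}{i}2^{n+1-i}k^{n-i}B_i^{(k)}(x)A_{n-i}(x)\frac{z^n}{n!}=2B(z)\bigl(A(2kz)-1\bigr),
\]
where the $-1$ removes the $A_0(x)=1$ contribution that would correspond to the forbidden index $i=n$. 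Hence the claimed recurrence is equivalent (after multiplying by $z^n/n!$ and summing) to
\[
B'(z)=(1+x)B(z)+2xB(z)\bigl(A(2kz)-1\bigr)=B(z)\bigl(1-x+2xA(2kz)\bigr).
\]

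Next I would verify this displayed ODE directly. Taking the logarithmic derivative in $z$ of \eqref{Bnkx-deff} and simplifying yields
\[
\frac{B'(z)}{B(z)}=(1-x)+\frac{2x(1-x)e^{2kz(1-x)}}{1-xe^{2kz(1-x)}}=(1-x)\cdot\frac{1+xe^{2kz(1-x)}}{1-xe^{2kz(1-x)}}.
\]
On the other hand, combining $1-x+2xA(2kz)$ over a common denominator gives
\[
1-x+2xA(2kz)=(1-x)\cdot\frac{e^{2kz(x-1)}+x}{e^{2kz(x-1)}-x},
\]
and multiplying numerator and denominator by $e^{2kz(1-x)}$ rewrites this as the very same expression obtained for $B'(z)/B(z)$. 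This establishes the ODE, and comparing coefficients of $z^n/n!$ on each side then recovers the proposition.

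The proof is essentially mechanical once the convolution is organized correctly; the only subtlety is the regrouping step. Because the inner sum runs only to $i=n-1$, the $m=0$ term of the convolution $B(z)A(2kz)$ is absent, which is exactly why the leading coefficient on the right of the ODE is $1-x=(1+x)-2x$ rather than $1+x$. Once this bookkeeping is handled, the remainder is a short manipulation of exponentials, so I do not expect any serious obstacle.
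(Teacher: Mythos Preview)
Your argument is correct, but it follows a genuinely different route from the paper's. The paper establishes the identity via its context-free grammar framework: it has already shown that for the grammar $G=\{I\to I(x+y),\ x\to 2kxy,\ y\to 2kxy\}$ one has $D_G^n(I)|_{y=1}=IB_n^{(k)}(x)$, and it knows (from Dumont) that $D_G^n(x+y)=2^{n+1}k^n\, x\sum_{\pi\in\mathfrak S_n}x^{\exc(\pi)}y^{n-\exc(\pi)}$ for $n\ge 1$. Applying the Leibniz rule to $D_G^{n+1}(I)=D_G^n\bigl(I(x+y)\bigr)$ then produces the stated convolution immediately upon setting $y=1$.

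Your approach instead packages the convolution into the equivalent first-order ODE $B'(z)=B(z)\bigl(1-x+2xA(2kz)\bigr)$ and checks it directly against the closed forms of $B(z)$ and $A(z)$. This is more self-contained and elementary: it avoids any appeal to grammars or combinatorial labelings and works purely at the level of generating functions. The paper's proof, by contrast, exploits machinery it has already built for other purposes and so is ``free'' in context; it also makes transparent \emph{why} the Eulerian polynomials $A_{n-i}(x)$ appear, namely as the homogeneous derivatives $D_G^{n-i}(x+y)$. Both arguments are short; yours would stand independently of the rest of the paper.
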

\begin{proof}
Let $G$ be the grammar given in Lemma~\ref{DnI-grammar}.
Combining Lemma~\ref{lemma-Bnkx} and Lemma~\ref{lemma-Bnkx-EGF}, we have
$$D_G^n(I)=I\sum_{w\in B_n}x^{\we(w)}y^{n-\we(w)}k^{n-\cyc(w)}.$$
Setting $y=1$, we see that for $n\geq 0$, $D_G^n(I)|_{y=1}=IB_n^{(k)}(x)$.
It follows from~\eqref{G3-Dumont} that for $n\geq 1$,
\begin{equation}
D_G^n(x+y)=2^{n+1}k^nx\sum_{\pi\in\msn}x^{\exc(\pi)}y^{n-\exc(\pi)}.
\end{equation}
By the Leibniz formula, we find that
\begin{align*}
D_G^{n+1}(I)&=\sum_{i=0}^n\binom{n}{i}D_G^i(I)D_G^{n-i}(x+y)\\
&=(x+y)D_G^{n}(I)+\sum_{i=0}^{n-1}\binom{n}{i}D_G^i(I)D_G^{n-i}(x+y).
\end{align*}
Setting $y=1$, we arrive at~\eqref{Bnkx-recu}.
\end{proof}

\section{The $1/k$-derangement polynomials of type $B$}\label{Section03}
Let $p(x)=\sum_{i=0}^dp_ix^i$. We say that $p(x)$ is {\it alternatingly increasing} if
$$p_0\leq p_d\leq p_1\leq p_{d-1}\leq\cdots \leq p_{\lrf{\frac{d+1}{2}}}.$$
We now recall an elementary result.
\begin{proposition}[{\cite{Beck2010}}]\label{prop01}
Let $p(x)$ be a polynomial of degree $d$.
There is a unique symmetric decomposition $p(x)= a(x)+xb(x)$, where $a(x)$ and $b(x)$ are symmetric polynomials
satisfying $a(x)=x^d a(\frac{1}{x})$ and $b(x)=x^{d-1}b(\frac{1}{x})$.
\end{proposition}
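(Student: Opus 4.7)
The plan is to construct the decomposition explicitly by applying the reciprocal involution $p(x)\mapsto x^d p(1/x)$ and solving a $2\times 2$ linear system for $a(x)$ and $b(x)$. Suppose such a decomposition exists. Replacing $x$ by $1/x$ in $p(x)=a(x)+xb(x)$ and multiplying through by $x^d$, then using the prescribed symmetries $a(x)=x^d a(1/x)$ and $b(x)=x^{d-1}b(1/x)$, gives
\[
x^d p(1/x) = x^d a(1/x) + x^{d-1} b(1/x) = a(x)+b(x).
\]
Combining this with $p(x)=a(x)+xb(x)$ yields a linear system whose unique solution is
\[
a(x)=\frac{x^{d+1}p(1/x)-p(x)}{x-1},\qquad b(x)=\frac{p(x)-x^d p(1/x)}{x-1}.
\]

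For existence I would first verify that these rational expressions are genuine polynomials: at $x=1$ both numerators evaluate to $p(1)-p(1)=0$, so $(x-1)$ divides each numerator. Next, a direct substitution into the formulas confirms the symmetry relations $a(x)=x^d a(1/x)$ and $b(x)=x^{d-1}b(1/x)$, essentially because the reciprocal operator $p\mapsto x^d p(1/x)$ is an involution on polynomials of degree $\leq d$ and the two numerators are swapped (with signs) by it. Finally, adding $a(x)+xb(x)$ telescopes to $p(x)$, so $(a,b)$ is a valid decomposition of the required form.

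For uniqueness, suppose $(a_1,b_1)$ and $(a_2,b_2)$ both satisfy the conclusion, and set $\alpha=a_1-a_2$, $\beta=b_1-b_2$. Then $\alpha(x)+x\beta(x)=0$ with $\alpha(x)=x^d\alpha(1/x)$ and $\beta(x)=x^{d-1}\beta(1/x)$. Substituting $\alpha=-x\beta$ into the symmetry for $\alpha$ gives $x\beta(x)=x^{d-1}\beta(1/x)$, i.e.\ $\beta(x)=x^{d-2}\beta(1/x)$. Combined with $\beta(x)=x^{d-1}\beta(1/x)$, this forces $(x-1)\beta(x)=0$ in the polynomial ring, hence $\beta=0$ and then $\alpha=0$.

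The argument is essentially bookkeeping, so I do not anticipate a genuine obstacle; the only conceptual point is to recognize that pairing the identity $p=a+xb$ with its image under the reciprocal operator converts the existence-and-uniqueness claim into inverting an explicit $2\times 2$ matrix over the polynomial ring, after which positivity of the resulting formulas at $x=1$ takes care of the polynomiality.
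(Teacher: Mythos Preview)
Your argument is correct and matches the paper's treatment: the paper does not give a proof but simply cites \cite{Beck2010} and records the explicit formulas~\eqref{ax-bx-prop01}, which are exactly your solved-for $a(x)$ and $b(x)$ (after multiplying numerator and denominator by $-1$). Your derivation via the reciprocal involution and the $2\times 2$ linear system is the standard route to those formulas, so there is nothing substantively different here.
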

It is routine to verify that
\begin{equation}\label{ax-bx-prop01}
a(x)=\frac{p(x)-x^{d+1}p(1/x)}{1-x},~b(x)=\frac{x^dp(1/x)-p(x)}{1-x}.
\end{equation}
From~\eqref{ax-bx-prop01}, we see that $\deg a(x)=d$ and $\deg b(x)\leq d-1$.
We call the ordered pair of polynomials $(a(x),b(x))$ the {\it symmetric decomposition} of $p(x)$.
Let $(a(x),b(x))$ be the symmetric decomposition of $p(x)$. If $a(x)$ and $b(x)$ are both $\gamma$-positive, then we say that
$p(x)$ is {\it bi-$\gamma$-positive}.
As pointed out by Br\"and\'en and Solus~\cite{Branden18},
the polynomial $p(z)$ is alternatingly increasing if and only if the pair of polynomials in its symmetric decomposition are both unimodal
and have nonnegative coefficients. Thus the bi-$\gamma$-positivity of $p(x)$ implies that $p(x)$ is alternatingly increasing.

A permutation $\pi\in\msn$ is a {\it derangement} if it has no fixed points, i.e., $\pi(i)\neq i$ for all $i\in [n]$.
Let $\mdn_n$ be the set of derangements in $\msn$.
Let $d_n(x)=\sum_{\pi\in\mdn_n}x^{\exc(\pi)}$ be the derangement polynomials.
It is well known that the generating function of $d_n(x)$ is given as follows (see~\cite[Proposition~6]{Brenti90}):
\begin{equation}\label{dxz-EGF}
d(x,z)=\sum_{n\geq 0}d_n(x)\frac{z^n}{n!}=\frac{(1-x)}{e^{xz}-xe^{z}}.
\end{equation}
By using continued fractions, Shin and Zeng~\cite[Theorem~11]{Zeng12} obtained the following result.
\begin{theorem}\label{thm-zeng}
For $n\geq 2$, we have
$$\sum_{\pi\in\mdn_n}x^{\exc(\pi)}q^{\cyc(\pi)}=\sum_{i=1}^{\lrf{n/2}}c_{n,k}(q)x^k(1+x)^{n-2k},$$
where $c_{n,k}(q)=\sum_{\pi\in \mdn_n(k)}q^{\cyc(\pi)}$ and $\mdn_n(k)$ is the subset of derangements in $\mdn_n$ with exactly $k$ cyclic valleys and without cyclic double descents.
\end{theorem}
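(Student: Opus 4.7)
The plan is to prove the identity via a cyclic analogue of the modified Foata--Strehl valley-hopping action, adapting Br\"and\'en's approach to $\gamma$-positivity.

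First, classify each $i \in [n]$ of $\pi \in \mdn_n$ according to the triple $(\pi^{-1}(i), i, \pi(i))$: call $i$ a cyclic peak if $\pi^{-1}(i) < i > \pi(i)$, a cyclic valley if $\pi^{-1}(i) > i < \pi(i)$, a cyclic double ascent if $\pi^{-1}(i) < i < \pi(i)$, and a cyclic double descent if $\pi^{-1}(i) > i > \pi(i)$. Because $\pi$ is a derangement, these four classes partition $[n]$; one checks directly that $\cpk(\pi) = \cval(\pi)$ and that $\exc(\pi) = \cval(\pi) + \cda(\pi)$, so
\begin{equation*}
x^{\exc(\pi)} \;=\; x^{\cval(\pi)}\cdot x^{\cda(\pi)}.
\end{equation*}

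Next, for each $x \in [n]$, I would define an involution $\varphi_x$ on $\mdn_n$ as follows. Write the cycle of $\pi$ containing $x$ as a linear word $M\, a_2\,\cdots\, a_m$ starting from its maximum $M$, treating $M$ as $+\infty$-padding at both ends. If $x = M$, or if $x$ is a cpk or cval, set $\varphi_x(\pi) = \pi$. Otherwise let $g_1$ and $g_2$ be the two cyclically nearest elements larger than $x$, the ``guards'': the cyclic segment from $g_1$ through $x$ to $g_2$ has $x$ as its unique local maximum; $x$ being a cdd then forces $x$ to sit immediately after $g_1$, while $x$ being a cda forces $x$ to sit immediately before $g_2$, and $\varphi_x$ is the bijection transposing these two placements. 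A short case analysis on the (at most five) vertices whose three-term neighborhood is affected shows that (i) $\varphi_x$ is an involution, (ii) distinct $\varphi_x$ and $\varphi_y$ commute, (iii) $\varphi_x$ preserves the cycle decomposition and hence $\cyc$, and (iv) the four-way type of every $i \neq x$ is preserved while the type at $x$ switches between cda and cdd.

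The group $\langle \varphi_x : x \in [n]\rangle \cong (\mathbb{Z}/2\mathbb{Z})^n$ thus acts on $\mdn_n$, and inside any orbit $\Orb$ the values $\cyc$, $\cval$, and the locations of all cpks and cvals are constants. The remaining $n - 2\cval(\pi)$ non-peak, non-valley positions independently carry either a cda label (contributing $x$ to $x^{\exc}$) or a cdd label (contributing $1$), and all $2^{n-2\cval(\pi)}$ combinations occur exactly once in $\Orb$, giving
\begin{equation*}
\sum_{\pi' \in \Orb} x^{\exc(\pi')} \;=\; x^{\cval(\pi)}\,(1+x)^{n - 2\cval(\pi)}.
\end{equation*}
The unique orbit representative with no cdd is, by definition, an element of $\mdn_n(\cval(\pi))$, so weighting by the orbit-constant factor $q^{\cyc}$ and summing over orbits yields the claimed identity.

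The principal obstacle is property (iv). When $x$ hops from immediately after $g_1$ to immediately before $g_2$, the predecessor or successor of each of $g_1$, $g_2$, and the first/last elements of the interior segment is reshuffled; one must verify that no affected vertex flips its four-way type. The reason it works is that the neighbour swapped in and the neighbour swapped out both lie on the same side of the threshold value at the affected vertex, but carrying this verification through cleanly -- especially in the boundary cases when the segment wraps around $M$ (so that $g_1 = g_2 = M$) or when the interior collapses to a single element -- is the delicate part of the argument. Once (iv) is established, the orbit-sum computation above completes the proof.
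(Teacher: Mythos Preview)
The paper does not supply its own proof of this theorem: it is quoted as \cite[Theorem~11]{Zeng12}, with the remark that Shin and Zeng established it ``by using continued fractions.'' So there is no in-paper argument to compare against; the benchmark is the continued-fraction method of Shin--Zeng.

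Your approach is genuinely different. Shin and Zeng encode the joint distribution of $(\exc,\cyc)$ on derangements as a Jacobi-type continued fraction and then read off the $\gamma$-expansion from a suitable contraction or quadratic transformation of that fraction; the combinatorial meaning of the $\gamma$-coefficients is obtained a posteriori. You instead run a cyclic modified Foata--Strehl action directly on $\mdn_n$, which is more elementary and yields the interpretation $c_{n,k}(q)=\sum_{\pi\in\mdn_n(k)}q^{\cyc(\pi)}$ for free, since the canonical orbit representative is precisely the derangement with no cyclic double descents. This style of argument is known to work (see for instance the group-action proofs in \cite{Lin15}), and your outline is on the right track: the identities $\cpk(\pi)=\cval(\pi)$ and $\exc(\pi)=\cval(\pi)+\cda(\pi)$ are correct for derangements, and the orbit-sum computation is standard once (i)--(iv) are in hand.

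The one place where your write-up is still a sketch is exactly where you flag it: property~(iv). Your description of $\varphi_x$ (``hop $x$ across the block between its two nearest larger guards'') is the right idea but is phrased loosely, and the boundary verifications (the wrap-around case $g_1=g_2=M$, and the case where the interior segment is empty or a single element) must be written out explicitly to confirm that no other vertex changes type and that cycles are preserved. None of these cases actually fails, but as written the proposal asserts rather than proves them; a referee would ask you to spell out at least one representative case in full.
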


In the following, we present a type $B$ analogue of Theorem~\ref{thm-zeng}.
A {\it fixed point} of $w\in B_n$ is an index $i\in [n]$ such that $w(i)=i$.
Let $\fix(w)$ denote the number of fixed points of $w$.
A {\it derangement of type $B$} is a
signed permutation $\pi\in B_n$ with no fixed points.
Let $\mdn_n^B$ be the set of all derangements in $B_n$.
We say that $i\in [n]$ is an {\it excedance} of $w$ if $w(|w(i)|)>w(i)$.
Let $\exc(w)$ be the number of excedances of $w$. Note that $\exc(w)=\we(w)$ for $w\in\mdn_n$.
The {\it type $B$ derangement polynomials} $d_n^B(x)$ are defined by
$$d_n^B(x)=\sum_{\pi\in \mdn_n^B}x^{\exc(\pi)},$$
which have been studied by Chen et al.~\cite{Chen09} in a slightly different form.
According to~\cite[Theorem 3.2]{Chow09}, the generating function of $d_n^B(x)$ is given as follows:
\begin{equation}\label{dxzB-EGF}
\sum_{n=0}^\infty d_n^B(x)\frac{z^n}{n!}=\frac{(1-x)e^z}{e^{2xz}-xe^{2z}}.
\end{equation}
Combining~\eqref{dxz-EGF} and~\eqref{dxzB-EGF}, we obtain
$$d_n^B(x)=\sum_{i=0}^n\binom{n}{i}2^id_i(x).$$

The type $B$ {\it $1/k$-derangement polynomials} $d_n^B(x;k)$ are defined by
\begin{equation}\label{dxzBk-EGF}
C(x,z;k)=\sum_{n\geq 0}d_n^B(x;k)\frac{z^n}{n!}=\left(\frac{(1-x)e^{kz}}{e^{2kxz}-xe^{2kz}}\right)^{1/k}.
\end{equation}
In particular, $d_0^B(x;k)=d_1^B(x;k)=1,d_2^B(x;k)=1+4kx$ and
$d_3^B(x;k)=1+(12k+8k^2)x+8k^2x^2$.
Comparing~\eqref{Bnkx-deff} with~\eqref{dxzBk-EGF}, we have
$$B_n^{(k)}(x)=\sum_{i=0}^n\binom{n}{i}d_i^B(x;k)x^{n-i}.$$

A grammatical interpretation of the polynomial $d_n^B(x)$ is given by~\cite[Theorem~11]{Ma1801}.
We now give a refinement of Lemma~\ref{lemma-Bnkx}.
\begin{lemma}\label{derangment-grammar01}
If $G_4=\{I\rightarrow I(y+u),x\rightarrow 2kxy,y\rightarrow 2kxy,u\rightarrow 2kxy\}$,
then
\begin{equation}\label{derangment-grammar022}
D_{G_4}^n(I)=I\sum_{w\in B_n}x^{\exc(w)}y^{\aexc(w)}u^{\fix(w)}k^{n-\cyc(w)}.
\end{equation}
\end{lemma}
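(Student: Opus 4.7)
The plan is to adapt the grammatical labeling used in the proof of Lemma~\ref{lemma-Bnkx}, refining it so that fixed points are separated from proper excedances and tracked by the new indeterminate $u$. For $w\in B_n$ written in standard cycle decomposition, I assign to each index $i\in[n]$ exactly one superscript label: $x$ if $i$ is a proper excedance (so $w(i)\neq i$ and $w(|w(i)|)>w(i)$), $y$ if $i$ is an anti-excedance, and $u$ if $i$ is a fixed point. Since $\exc(w)+\aexc(w)+\fix(w)=n$, these three classes partition $[n]$. The subscript labels are carried over unchanged from Lemma~\ref{lemma-Bnkx}: a $k$ before every element that is not the first of its cycle, and an $I$ at the end of the word. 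The weight of $w$ is then $Ix^{\exc(w)}y^{\aexc(w)}u^{\fix(w)}k^{n-\cyc(w)}$, which is exactly the summand in~\eqref{derangment-grammar022}.

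I then prove~\eqref{derangment-grammar022} by induction on $n$. The base case is $B_1=\{(1^u)_I,(\bar{1}^y)_I\}$, with total weight $I(u+y)=D_{G_4}(I)$. For the inductive step, each element of $B_n$ arises from one of $B_{n-1}$ either by appending a new singleton $(n)$ or $(\bar{n})$, or by inserting $n$ or $\bar{n}$ into an existing cycle. The two singleton cases contribute a fixed point (from $(n)$, label $u$) and an anti-excedance (from $(\bar{n})$, label $y$), so they match $I\to I(y+u)$. Insertions at a position labeled $y$ reproduce the anti-excedance cases of Lemma~\ref{lemma-Bnkx} verbatim, giving $y\to 2kxy$. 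Insertions at a position labeled $x$ in this refined scheme can occur only inside a non-singleton cycle (since a fixed point is necessarily a singleton), and so correspond exactly to the \emph{long} cases of the weak-excedance analysis in Lemma~\ref{lemma-Bnkx}, yielding $x\to 2kxy$.

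The genuinely new rule is $u\to 2kxy$, which governs the insertion of $n$ or $\bar{n}$ into a singleton cycle $(i^u)$. A direct check shows that inserting $n$ turns $(i^u)$ into $(n^x_k i^y)$ and inserting $\bar{n}$ turns $(i^u)$ into $(\bar{n}^y_k i^x)$; each contributes $kxy$, summing to $2kxy$, and in both cases the former fixed point $i$ is destroyed and no new fixed point is created. The main bookkeeping concern is to confirm that these insertions biject labeled elements of $B_{n-1}$ (with the insertion data) onto labeled elements of $B_n$ exhaustively and without repetition; this is immediate from the standard-form convention, since the inserted $n$ or $\bar{n}$ is always the new largest-absolute-value entry of its cycle and hence its canonical first element, and $n$ is never a fixed point after being inserted into a non-singleton cycle. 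With all four substitution rules verified, the inductive step goes through and~\eqref{derangment-grammar022} follows.
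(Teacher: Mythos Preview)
Your proposal is correct and follows essentially the same approach as the paper's proof: the same refined grammatical labeling (with $u$ marking fixed points), the same induction on $n$, and the same case analysis of insertions of $n$ or $\overline{n}$. The only cosmetic discrepancy is that in the $u\to 2kxy$ case your superscript assignments $(n^x{}_k\, i^y)$ and $(\overline{n}^y{}_k\, i^x)$ are swapped relative to the paper's $(n^y{}_k\, i^x)$ and $(\overline{n}^x{}_k\, i^y)$; this does not affect the resulting substitution rule, and under the stated definitions of excedance and anti-excedance your labeling is in fact the consistent one.
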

\begin{proof}
We now introduce a grammatical labeling of $w\in B_n$ as follows:
\begin{itemize}
  \item [\rm ($L_1$)]If $i$ is an excedance, then put a superscript label $x$ right after $i$;
 \item [\rm ($L_2$)]If $i$ is an anti-excedance, then put a superscript label $y$ right after $i$;
\item [\rm ($L_3$)]If $i$ is a fixed point, then put a superscript label $u$ right after $i$;
\item [\rm ($L_4$)]Put a subscript label $I$ right after $w$;
\item [\rm ($L_5$)] Put a subscript label $k$ just before every element of $w$ except the first element in each cycle;
\end{itemize}
Note that the weight of $w$ is given by $Ix^{\exc(w)}y^{\aexc(w)}u^{\fix(w)}k^{n-\cyc(\pi)}$.
For $n=1$, we have $B_1=\{(1^u)_I,(\overline{1}^y)_I,\}$.
Note that $D_{G_4}(I)=I(y+u)$.
Thus the sum of weights of the elements in $B_1$ is given by $D_{G_4}(I)$.
Hence the result holds for $n=1$.
We proceed by induction on $n$.
Suppose we get all labeled permutations in $B_{n-1}$, where $n\geq 2$. Let
$\widetilde{w}$ be obtained from $w \in B_{n-1}$ by inserting $n$ or $\overline{n}$.
When the inserted $n$ or $\overline{n}$ forms a new cycle, the insertion corresponds to the substitution rule $I\rightarrow I(y+u)$.
For the other cases, the changes of labeling are illustrated as follows:
$$\cdots(i^u)\cdots\mapsto \cdots(n^y_ki^x)\cdots;~~\cdots(i^u)\cdots\mapsto \cdots(\overline{n}^x_ki^y)\cdots;$$
$$\cdots(\cdots w(i)^x_k w(i+1)\cdots w(j))\cdots\mapsto \cdots(n^y_k w(i+1)\cdots w(j)_k\cdots w(i)^x)\cdots;$$
$$\cdots(\cdots w(i)^x_k w(i+1)\cdots w(j))\cdots\mapsto \cdots(\overline{n}^x_k w(i+1)\cdots w(j)_k\cdots w(i)^y)\cdots,$$
$$\cdots(\overline{i}^y)\cdots\mapsto \cdots(n^y_k\overline{i}^x)\cdots;~~\cdots(\overline{i}^y)\cdots\mapsto \cdots(\overline{n}^x_k\overline{i}^y)\cdots;$$
$$\cdots(\cdots w(i)^y_k w(i+1)\cdots w(j))\cdots\mapsto \cdots(n^y_k w(i+1)\cdots w(j)_k\cdots w(i)^x)\cdots;$$
$$\cdots(\cdots w(i)^y_k w(i+1)\cdots w(j))\cdots\mapsto \cdots(\overline{n}^x_k w(i+1)\cdots w(j)_k\cdots w(i)^y)\cdots.$$
In each case, the insertion of $n$ or $\overline{n}$ corresponds to one substitution rule in ${G_4}$. By induction,
it is routine to check that the action of $D_{G_4}$ on elements of $B_{n-1}$ generates all elements of $B_{n}$.
This completes the proof.
\end{proof}

We define $$H_{n}(x,y,u;k)=\sum_{w\in B_n}x^{\exc(w)}y^{\aexc(w)}u^{\fix(w)}k^{n-\cyc(w)},$$
$$H:=H(x,y,u,z;k)=\sum_{n=0}^{\infty}H_{n}(x,y,u;k)\frac{z^n}{n!}.$$
\begin{lemma}\label{lemma-Bnkx-EGF}
We have
\begin{equation}\label{Hxyuz}
H(x,y,u,z;k)=\left(\frac{(y-x)e^{kz(y+u-2x)}}{y-xe^{2kz(y-x)}}\right)^{1/k}.
\end{equation}
\end{lemma}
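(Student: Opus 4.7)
The plan is to mimic the proof of the earlier Lemma (the one giving $F(x,y;k)$): first convert the grammatical identity of Lemma~\ref{derangment-grammar01} into a partial differential equation for the generating function $H$, then verify by direct computation that the proposed closed form satisfies that PDE with the correct initial data, and finally invoke uniqueness of the solution.

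First I would apply $D_{G_4}$ to the identity $D_{G_4}^n(I)=I\,H_n(x,y,u;k)$ to obtain the recurrence
\begin{equation*}
H_{n+1}(x,y,u;k)=(y+u)H_n(x,y,u;k)+2kxy\left(\frac{\partial}{\partial x}+\frac{\partial}{\partial y}+\frac{\partial}{\partial u}\right)H_n(x,y,u;k),
\end{equation*}
with $H_0=1$. Multiplying by $z^n/n!$ and summing, this is equivalent to the linear first-order PDE
\begin{equation*}
\frac{\partial H}{\partial z}=(y+u)H+2kxy\left(\frac{\partial}{\partial x}+\frac{\partial}{\partial y}+\frac{\partial}{\partial u}\right)H,\qquad H(x,y,u,0;k)=1.
\end{equation*}

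Next I would denote the proposed solution
\begin{equation*}
\widetilde{H}=\left(\frac{(y-x)e^{kz(y+u-2x)}}{y-xe^{2kz(y-x)}}\right)^{1/k}
\end{equation*}
and verify that it satisfies the PDE. Logarithmic differentiation makes this tractable: writing $\log\widetilde{H}=\frac{1}{k}\bigl[\log(y-x)+kz(y+u-2x)-\log(y-xe^{2kz(y-x)})\bigr]$, each of $\partial_z,\partial_x,\partial_y,\partial_u$ produces a sum of simple rational expressions in $x,y,u$ and in $E:=e^{2kz(y-x)}$. I would then check that $\partial_z\log\widetilde H-(y+u)-2kxy(\partial_x+\partial_y+\partial_u)\log\widetilde H$ vanishes identically. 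The terms involving $u$ collapse immediately (only $\partial_z$ and $\partial_u$ contribute, and they cancel cleanly against $u$), so the bulk of the check is the $x,y$ portion, which is precisely the analogous verification already carried out for $F(x,y;k)$ in Lemma~\ref{lemma-Bnkx-EGF}; the new factor $e^{kzu}$ only shifts $\partial_z$ by $u$, balancing the $u$ term on the right. At $z=0$ the formula gives $\widetilde{H}=\bigl((y-x)/(y-x)\bigr)^{1/k}=1$, matching the initial condition.

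Since the PDE is first-order linear in $z$ with analytic coefficients and $\widetilde H$ agrees with $H$ at $z=0$, uniqueness forces $H=\widetilde H$, which is \eqref{Hxyuz}. The main (and really only) obstacle is bookkeeping in the PDE verification: keeping track of the factor $E=e^{2kz(y-x)}$ and its various partial derivatives, and confirming that the $u$-dependent contribution from $e^{kz(y+u-2x)}$ in the numerator exactly supplies the linear $u H$ term on the right-hand side. Everything else follows the template already used in Section~\ref{Section02}.
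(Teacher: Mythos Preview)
Your proposal is correct and follows essentially the same route as the paper: derive the recurrence for $H_n$ from the grammar, pass to the PDE $\partial_z H=(y+u)H+2kxy(\partial_x+\partial_y+\partial_u)H$, check that the closed form satisfies it with the right initial value, and conclude by uniqueness. The paper is terser (it simply asserts the verification is routine and records a couple of extra boundary specializations), whereas you spell out the logarithmic-differentiation strategy and the reduction to the earlier $F$-computation, but the argument is the same.
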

\begin{proof}
Since $D_{G_4}^{n+1}(I)=D_{G_4}(IH_{n}(x,y,u;k))$, it follows that
\begin{align*}
D_{G_4}^{n+1}(I)
&=I(y+u)H_{n}(x,y,u;k)+2kxyI\left(\frac{\partial}{\partial x}+\frac{\partial}{\partial y}+\frac{\partial}{\partial u}\right)H_{n}(x,y,u;k).
\end{align*}
Thus $H_{n+1}(x,y,u;k)=(y+u)H_{n}(x,y,u;k)+2kxy\left(\frac{\partial}{\partial x}+\frac{\partial}{\partial y}+\frac{\partial}{\partial u}\right)H_{n}(x,y,u;k)$.
By rewriting this recurrence relation in terms of the generating function $H$, we have
\begin{equation}\label{recu-Hnkxy03}
\frac{\partial}{\partial z}H=(y+u)H+2kxy\left(\frac{\partial}{\partial x}+\frac{\partial}{\partial y}+\frac{\partial}{\partial u}\right)H.
\end{equation}
It is routine to check that the generating function $$\widetilde{H}(x,y,u,z;k)=\left(\frac{(y-x)e^{kz(y+u-2x)}}{y-xe^{2kz(y-x)}}\right)^{1/k}$$
satisfy~\eqref{recu-Hnkxy03}. Note that $\widetilde{H}(x,y,u,0;k)=1, \widetilde{H}(0,y,u,z;k)=e^{u+y}$ and $\widetilde{H}(x,0,u,z;k)=e^{uz}$.
Hence $\widetilde{H}(x,y,u,z;k)=H(x,y,u,z;k)$.
\end{proof}

We now present a derangement counterpart of Theorem~\ref{thm1}.
\begin{theorem}\label{mainthm02}
For $n\geq 1$, we have $$d_n^B(x;k)=\sum_{\pi\in \mdn_n^B}x^{\we(\pi)}k^{n-\cyc(\pi)}.$$
When $k\geq 1/2$, the polynomials $d_n^B(x;k)$ are bi-$\gamma$-positive. More precisely,
we have
$$d_n^B(x;k)=\sum_{j=0}^{\lrf{(n-1)/2}}p(n,j;k)x^j(1+x)^{n-1-2j}+\sum_{j=0}^{\lrf{n/2}}q(n,j;k)x^j(1+x)^{n-2j},$$
where the numbers $p(n,j;k)$ and $q(n,j;k)$ satisfy the following recurrence system:
\begin{align*}
p(n+1,j;k)&=(1+2kj)p(n,j;k)+4k(n-2j+1)p(n,j-1;k)+\\& 2knp(n-1,j-1;k)+q(n,j;k),\\
q(n+1,j;k)&=2kjq(n,j;k)+4k(n-2j+2)q(n,j-1;k)+2knq(n-1,j-1;k)+\\& (2k-1)p(n,j-1;k),
\end{align*}
with $q(0,0;k)=1$ and $q(0,j;k)=0$ for $j\neq 0$, $p(0,j;k)=0$ for any $j$.
\end{theorem}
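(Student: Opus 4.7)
The plan is to prove the theorem in two stages. The combinatorial interpretation follows by specializing Lemma~\ref{lemma-Bnkx-EGF}: in the grammar~$G_4$ the variable~$u$ marks fixed points, so setting $u=0$ isolates derangements. Multiplying the numerator and denominator of~\eqref{dxzBk-EGF} by $e^{-2kzx}$ shows $C(x,z;k)=H(x,1,0,z;k)$, hence $d_n^B(x;k)=H_n(x,1,0;k)=\sum_{\pi\in\mdn_n^B}x^{\exc(\pi)}k^{n-\cyc(\pi)}$; since $\we(\pi)=\exc(\pi)$ when $\pi$ has no fixed points, the first assertion follows.

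For the bi-$\gamma$-positivity, the key intermediate step is a closed recurrence for the homogenization $d_n^B(x,y;k):=y^n d_n^B(x/y;k)$. Lemma~\ref{derangment-grammar01} yields $H_{n+1}=(y+u)H_n+2kxy(\partial_x+\partial_y+\partial_u)H_n$. Extracting the coefficient of $u^0$ introduces the auxiliary polynomial $[\partial_u H_n]_{u=0}$, which enumerates signed permutations with exactly one fixed point. Deleting the unique fixed point together with its mirror gives a bijection with $[n]\times\mdn_{n-1}^B$ that preserves both $\exc$ and $\aexc$ and reduces $\cyc$ by one; consequently $[\partial_u H_n]_{u=0}=n\,d_{n-1}^B(x,y;k)$, yielding the closed recurrence
\begin{equation*}
d_{n+1}^B(x,y;k)=y\,d_n^B(x,y;k)+2kxy(\partial_x+\partial_y)d_n^B(x,y;k)+2knxy\,d_{n-1}^B(x,y;k).
\end{equation*}

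Next, I would introduce $S=x+y$ and $T=xy$. Since $d_n^B(x,y;k)-d_n^B(y,x;k)$ vanishes on $y=x$ and is therefore divisible by $y-x$, there is a unique decomposition $d_n^B(x,y;k)=y\,A_n(S,T;k)+B_n(S,T;k)$ with $A_n,B_n\in\Q[S,T,k]$. Substituting this ansatz into the recurrence above, reducing via $y^2=yS-T$, and using $(\partial_x+\partial_y)S=2$ and $(\partial_x+\partial_y)T=S$ to compute the differential terms, I separate the $y$-linear and $y$-free parts to obtain two coupled recurrences for $A_n$ and $B_n$. Expanding $A_n=\sum_j p(n,j;k)T^j S^{n-1-2j}$ and $B_n=\sum_j q(n,j;k)T^j S^{n-2j}$ and matching monomials then reproduces exactly the recurrences stated in the theorem. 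Specializing $y=1$ gives the claimed decomposition of $d_n^B(x;k)$, and a routine induction on $n$ establishes $p(n,j;k),q(n,j;k)\geq 0$ whenever $k\geq 1/2$: every coefficient on the right-hand side of the two recurrences is manifestly nonnegative for such $k$, the single delicate one being the $(2k-1)$ multiplying $p(n,j-1;k)$ in the $q$-recurrence, which is precisely what forces the hypothesis. The main obstacle is the second paragraph: proving the identity $[\partial_u H_n]_{u=0}=n\,d_{n-1}^B(x,y;k)$ requires a careful check that the fixed-point-deletion bijection preserves $\exc$ and $\aexc$ individually (not just their sum) and that $\cyc$ decreases by exactly one; once this is in hand, the remaining steps reduce to a controlled change-of-variables computation.
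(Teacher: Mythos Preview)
Your proposal is correct and reaches the same destination as the paper, but the packaging differs in an instructive way. The paper does not first derive a closed three-term recurrence for $d_n^B(x,y;k)$; instead it performs a \emph{change of grammar}, setting $a=xy$, $b=x+y$, $c=Iy$ in $G_4$ to obtain a new grammar $G_5=\{I\to c+uI,\ c\to (b+u)c+(2k-1)aI,\ u\to 2ka,\ a\to 2kab,\ b\to 4ka\}$ and then expands $D_{G_5}^n(I)$ in the basis $u^i a^j b^{\,\cdot} c$ and $u^i a^j b^{\,\cdot} I$. The $c$-terms correspond exactly to your $yA_n(S,T;k)$ (since $c=Iy$) and the $I$-terms to your $B_n(S,T;k)$, so the two decompositions coincide. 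Likewise, your bijective identity $[\partial_u H_n]_{u=0}=n\,d_{n-1}^B(x,y;k)$ is precisely what the paper uses when it writes ``since $u$ marks fixed points, $p(n,1,j-1;k)=np(n-1,0,j-1;k)$''; the paper simply does not spell out the fixed-point-deletion map. Your route has the advantage of producing the clean two-step recurrence for $d_n^B(x,y;k)$ explicitly before changing variables, which makes the origin of the $(2k-1)$ coefficient transparent (it comes from $-TA_n+2kTA_n$ after reducing $y^2=yS-T$); the paper's grammar formulation has the advantage of fitting the argument into the same template used for Theorem~\ref{thm1}. Either way, the computation you sketch (verifying the recurrences for $p(n,j;k)$ and $q(n,j;k)$ by matching monomials $T^jS^{n-2j}$ and $T^jS^{n+1-2j}$) goes through exactly as you describe.
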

\begin{proof}
Comparing~\eqref{dxzBk-EGF} and~\eqref{Hxyuz}, we see that $C(x,z;k)=H(x,1,0,z;k)$,
which leads to the combinatorial interpretation of $d_n^B(x;k)$.
We now consider a change of the grammar given in Lemma~\ref{derangment-grammar01}.
Note that $D_{G_4}(I)=Iy+Iu$ and
$$D_{G_4}(Iy)=I(y^2+yu+2kxy)=Iy(x+y)+Iyu+(2k-1)Ixy.$$
Setting $a=xy,~b=x+y$ and $c=Iy$, we get $D_{G_4}(a)=2kab,D_{G_4}(b)=4ka$,
$$D_{G_4}(I)=c+uI,D_{G_4}(c)=(b+u)c+(2k-1)aI,D_{G_4}(u)=2ka.$$
Consider the grammar
$$G_5=\{I\rightarrow c+uI,c\rightarrow (b+u)c+(2k-1)aI,u\rightarrow 2ka,a\rightarrow 2kab,b\rightarrow 4ka\}.$$
Note that $D_{G_5}(I)=c+Iu,~D^2_{G_5}(I)=(b+2u)c+(u^2+(4k-1)a)I$.
By induction, it is routine to verify that there exist nonnegative integers such that
\begin{equation}\label{Dg5}
D_{G_5}^n(I)=\sum_{i=0}^nu^i\left(\sum_{j=0}^{(n-1-i)/2}p(n,i,j;k)a^jb^{n-1-i-2j}c+\sum_{j=0}^{(n-i)/2}q(n,i,j;k)a^jb^{n-i-2j}I\right).
\end{equation}
Extracting the coefficients of $a^jb^{n-2j}c$ and $a^jb^{n+1-2j}I$ on both sides of the following expansion
$$D_{G_5}^{n+1}(I)=D_{G_5}\left(\sum_{i,j}p(n,i,j;k)u^ia^jb^{n-1-i-2j}c+\sum_{i,j}q(n,i,j;k)u^ia^jb^{n-i-2j}I\right),$$
it is easy to verify the following recurrence system:
\begin{align*}
p(n+1,0,j;k)&=(1+2kj)p(n,0,j;k)+4k(n-2j+1)p(n,0,j-1;k)+\\& 2kp(n,1,j-1;k)+q(n,0,j;k),\\
q(n+1,0,j;k)&=2kjq(n,0,j;k)+4k(n-2j+2)q(n,0,j-1;k)+2kq(n,1,j-1;k)+\\& (2k-1)p(n,0,j-1;k).
\end{align*}
Since $u$ marks fixed points, we have $p(n,1,j-1;k)=np(n-1,0,j-1;k)$ and $q(n,1,j-1;k)=nq(n-1,0,j-1;k)$.
Setting $p(n,0,j;k)=p(n,j;k)$ and $q(n,0,j;k)=q(n,j;k)$, we get the recurrence system of the numbers $p(n,j;k)$ and $q(n,j;k)$.
Setting $u=0$ and $y=1$ in~\eqref{Dg5} and then taking $a=x,~b=1+x$ and $c=I$, we get the symmetric decomposition of $d_n^B(x;k)$.
Clearly, when $k\geq 1/2$, $p(n,j;k)$ and $q(n,j;k)$ are nonnegative, and so $d_n^B(x;k)$ are bi-$\gamma$-positive.
\end{proof}

Define
$$P_n(x;k)=\sum_{j=0}^{\lrf{(n-1)/2}}p(n,j;k)x^j(1+x)^{n-1-2j},$$
$$Q_n(x;k)=\sum_{j=0}^{\lrf{n/2}}q(n,j;k)x^j(1+x)^{n-2j}.$$
In particular, $P_1(x;k)=1,P_2(x;k)=1+x,P_3(x;k)=1+(1+12k)x+x^2,Q_1(x;k)=0,Q_2(x;k)=(4k-1)x$ and $Q_3(x;k)=(8k^2-1)x(1+x)$.
\begin{corollary}\label{cor-derangement}
The polynomials $P_n(x;k)$ and $Q_n(x;k)$ satisfy the recurrence system
\begin{align*}
P_{n+1}(x;k)&=(1+(2kn-2k+1)x)P_n(x;k)+2kx(1-x)P_n'(x;k)+\\&2knxP_{n-1}(x;k)+Q_n(x;k),\\
Q_{n+1}(x;k)&=2knxQ_n(x;k)+2kx(1-x)Q_n'(x;k)+2knxQ_{n-1}(x;k)+\\&(2k-1)xP_n(x;k),
\end{align*}
with initial conditions $P_0(x;k)=0$ and $Q_0(x;k)=1$.
\end{corollary}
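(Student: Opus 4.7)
The plan is to derive both polynomial identities by translating the coefficient recurrences from Theorem~\ref{mainthm02} directly into sums over the $\gamma$-basis $\{x^j(1+x)^{m-2j}\}$. For $P_{n+1}(x;k)$ I will multiply the recurrence for $p(n+1,j;k)$ by $x^j(1+x)^{n-2j}$ and sum over $j$, and do the analogous thing for $Q_{n+1}(x;k)$ with the weight $x^j(1+x)^{n+1-2j}$. Two of the four summands on each side are essentially free: after the re-indexing $j\mapsto j-1$, the contributions $2kn\,p(n-1,j-1;k)$ and $q(n,j;k)$ produce $2knxP_{n-1}(x;k)$ and $Q_n(x;k)$ respectively, while their $Q$-counterparts $2kn\,q(n-1,j-1;k)$ and $(2k-1)p(n,j-1;k)$ give $2knxQ_{n-1}(x;k)$ and $(2k-1)xP_n(x;k)$.

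The crux is the pair of summands $(1+2kj)p(n,j;k)$ and $4k(n-2j+1)p(n,j-1;k)$. The constant part of the first summand gives $(1+x)P_n(x;k)$. Introducing the auxiliary sum
\[
C=\sum_{j}(n-1-2j)\,p(n,j;k)\,x^{j+1}(1+x)^{n-2-2j},
\]
the identity $xP_n'(x;k)=\sum_j j\,p(n,j;k)x^j(1+x)^{n-1-2j}+C$ rewrites the $2kj$-contribution as $2k(1+x)(xP_n'-C)$, while the re-indexed $4k(n-2j+1)$-sum is exactly $4kC$. Combining the two terms yields $2k(1+x)xP_n'+2k(1-x)C$, so the required recurrence reduces to the single identity
\[
(1-x)C=(n-1)xP_n(x;k)-2x^2P_n'(x;k).
\]
I will verify this by expanding the right-hand side and using $(1+x)-2x=1-x$ to telescope the two arising sums into precisely $(1-x)C$. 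The $Q$ case is completely parallel: with $C'=\sum_j(n-2j)q(n,j;k)x^{j+1}(1+x)^{n-1-2j}$ the analogue $(1-x)C'=nxQ_n(x;k)-2x^2Q_n'(x;k)$ produces the desired recurrence for $Q_{n+1}(x;k)$. The initial conditions $P_0(x;k)=0$ and $Q_0(x;k)=1$ are immediate from $p(0,j;k)=0$ and $q(0,0;k)=1$.

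The main obstacle is purely bookkeeping. The $(1+x)$-exponents differ between the two families ($n-1-2j$ in $P_n$ versus $n-2j$ in $Q_n$), so every shift $j\mapsto j-1$ and every multiplication by $(1+x)$ must be tracked carefully to land back in the correct basis. The only non-automatic algebraic step is recognizing the cancellation $2k(1+x)xP_n'+2k(1-x)C-2k(n-1)xP_n=2kx(1-x)P_n'$, which is precisely what produces the derivative combination $2kx(1-x)(\cdot)'$ in the final recurrence.
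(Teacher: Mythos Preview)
Your proof is correct. You multiply the coefficient recurrences of Theorem~\ref{mainthm02} by the $\gamma$-basis monomials $x^j(1+x)^{n-2j}$ (resp.\ $x^j(1+x)^{n+1-2j}$) and sum directly; the auxiliary quantity $C$ and the identity $(1-x)C=(n-1)xP_n-2x^2P_n'$ (and its $Q$-analogue) are exactly what is needed to collapse the two nontrivial summands into $2kx(1-x)P_n'+2k(n-1)xP_n$, and the remaining bookkeeping checks out.

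The paper takes a slightly different route: it first introduces the ordinary generating polynomials $p_n(x)=\sum_j p(n,j;k)x^j$ and $q_n(x)=\sum_j q(n,j;k)x^j$, derives the simpler recurrences
\[
p_{n+1}(x)=(1+4k(n-1)x)p_n+2kx(1-4x)p_n'+2knxp_{n-1}+q_n,
\]
\[
q_{n+1}(x)=4knxq_n+2kx(1-4x)q_n'+2knxq_{n-1}+(2k-1)xp_n,
\]
and then uses the substitutions $P_n(x;k)=(1+x)^{n-1}p_n\!\big(x/(1+x)^2\big)$ and $Q_n(x;k)=(1+x)^n q_n\!\big(x/(1+x)^2\big)$ to transport these to the $\gamma$-basis. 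Your approach avoids the auxiliary polynomials and the change of variables at the cost of the telescoping identity for $C$; the paper's approach pushes that same algebra into the chain rule for the substitution $x\mapsto x/(1+x)^2$. Both are natural and of comparable length.
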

\begin{proof}
Define $$p_n(x)=\sum_{j=0}^{\lrf{(n-1)/2}}p(n,j;k)x^j,~q_n(x)=\sum_{j=0}^{\lrf{n/2}}q(n,j;k)x^j.$$
Multiplying both sides of the recurrence system of the numbers $p(n,j;k)$ and $q(n,j;k)$ and summing over all $j$, we get the following recurrence system:
\begin{align*}
p_{n+1}(x)&=(1+4k(n-1)x)p_n(x)+2kx(1-4x)p_n'(x)+2knxp_{n-1}(x)+q_n(x),\\
q_{n+1}(x)&=4knxq_n(x)+2kx(1-4x)q_n'(x)+2knxq_{n-1}(x)+(2k-1)xp_n(x),
\end{align*}
with the initial conditions $p_0(x)=0,p_1(x)=1$, $q_0(x)=1$ and $q_1(x)=0$.
Note that
$$P_n(x;k)=(1+x)^{n-1}p_n\left(\frac{x}{(1+x)^2}\right),$$
$$Q_n(x;k)=(1+x)^{n}q_n\left(\frac{x}{(1+x)^2}\right).$$
Substituting $x\rightarrow {x}/{(1+x)^2}$ into the recurrence system of the polynomials $p_n(x)$ and $q_n(x)$ and
simplifying some terms leads to the desired result.
\end{proof}

A {\it succession} of $\pi\in\msn$ is an index $i$ such that $\pi(i+1)=\pi(i)+1$, where $i\in [n-1]$.
Let $\msn^{s}$ denote the set of permutations in $\msn$ with no successions.
We can now give the following result.
\begin{theorem}\label{theorem-derange}
For $n\geq 1$, we have
$$d_n^B(x;1/2)=\frac{1}{x}d_{n+1}(x)+d_n(x),$$
where $d_n(x)$ is the classical derangement polynomial.
Moreover, we have
\begin{equation}\label{dbB12}
d_n^B(x;1/2)=\sum_{\pi\in\ms_{n+1}^s}x^{\asc(\pi)}.
\end{equation}
\end{theorem}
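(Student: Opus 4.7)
My plan is to treat the two assertions of the theorem separately. For the first identity I will verify it at the level of exponential generating functions using \eqref{dxz-EGF} and \eqref{dxzBk-EGF}. For the second I will perform inclusion-exclusion on the set of successions and then match generating functions using the EGF of the Eulerian polynomials.

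For the first identity, setting $k=1/2$ in \eqref{dxzBk-EGF} gives
$$C(x,z;1/2) = \left(\frac{(1-x)e^{z/2}}{e^{xz}-xe^z}\right)^2 = \frac{(1-x)^2e^z}{(e^{xz}-xe^z)^2}.$$
From \eqref{dxz-EGF}, one computes
$$\frac{1}{x}\frac{\partial d(x,z)}{\partial z} = \frac{(1-x)(e^z - e^{xz})}{(e^{xz}-xe^z)^2}, \qquad d(x,z) = \frac{(1-x)(e^{xz}-xe^z)}{(e^{xz}-xe^z)^2},$$
and the two fractions add to exactly $C(x,z;1/2)$. Extracting the coefficient of $z^n/n!$ yields $d_n^B(x;1/2)=\frac{1}{x}d_{n+1}(x)+d_n(x)$.

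For \eqref{dbB12}, let $S_{n+1}(x):=\sum_{\pi\in\ms_{n+1}^s}x^{\asc(\pi)}$. Writing $\mathrm{Succ}(\pi)\subseteq[n]$ for the set of positions $i$ with $\pi(i+1)=\pi(i)+1$, inclusion-exclusion gives
$$S_{n+1}(x)=\sum_{I\subseteq[n]}(-1)^{|I|}\sum_{\substack{\pi\in\ms_{n+1}\\ I\subseteq\mathrm{Succ}(\pi)}}x^{\asc(\pi)}.$$
For a fixed $I\subseteq[n]$ with $|I|=k$, the subset $I$ partitions $[n+1]$ into $m:=n+1-k$ maximal \emph{chains} of consecutive positions; a permutation $\pi$ with $I\subseteq\mathrm{Succ}(\pi)$ corresponds to filling each chain by a run of consecutive integers, which in turn is encoded by a permutation $\sigma\in\ms_m$ recording the value-rank of the chain at each position-slot. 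I will then show $\asc(\pi)=k+\asc(\sigma)$: the $k$ successions in $I$ are ascents by construction, while at a position $j\in[n]\setminus I$ between chain $i$ and chain $i+1$ one has $\pi(j)<\pi(j+1)$ precisely when the $(i+1)$-st chain has higher value-rank than the $i$-th, that is, when $\sigma(i)<\sigma(i+1)$. Summing over $I$ yields
$$S_{n+1}(x)=\sum_{k=0}^{n}(-1)^k\binom{n}{k}x^k A_{n+1-k}(x),$$
where $A_m(x)=\sum_{\sigma\in\ms_m}x^{\asc(\sigma)}$ is the Eulerian polynomial.

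To finish, I will convert to EGFs. By convolution and the classical EGF of the Eulerian polynomials,
$$\sum_{n\ge 0}S_{n+1}(x)\frac{z^n}{n!} = e^{-xz}\cdot\frac{\partial}{\partial z}\!\left(\frac{1-x}{e^{(x-1)z}-x}\right) = \frac{(1-x)^2 e^{-z}}{(e^{(x-1)z}-x)^2},$$
which coincides with $C(x,z;1/2)$ after using $(e^{xz}-xe^z)^2 = e^{2z}(e^{(x-1)z}-x)^2$. This gives $S_{n+1}(x)=d_n^B(x;1/2)$ and, together with the first identity, completes the proof. The main obstacle will be the chain-boundary ascent analysis: one must carefully check that the ascent at a non-succession chain-boundary depends only on the relative value-ranks of the adjacent chains and not on their sizes, which reduces to the fact that when $\sigma(i)<\sigma(i+1)$ the starting value of the next chain necessarily exceeds the ending value of the current chain by at least $1$, while the reverse value-ranking yields the opposite strict inequality.
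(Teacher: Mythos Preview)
Your argument is correct, and it takes a genuinely different route from the paper's proof.

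For the first identity, the paper does \emph{not} compare generating functions directly. Instead it specializes the recurrence system of Corollary~\ref{cor-derangement} to $k=1/2$, identifies $Q_n(x;1/2)$ with $d_n(x)$ via a known recurrence for the derangement polynomials, and then proves $P_n(x;1/2)=\tfrac{1}{x}d_{n+1}(x)$ by induction. Your EGF computation is quicker and entirely bypasses the $P_n/Q_n$ machinery; on the other hand, the paper's route yields the extra structural fact that $\bigl(\tfrac{1}{x}d_{n+1}(x),\,d_n(x)\bigr)$ is precisely the symmetric (bi-$\gamma$) decomposition of $d_n^B(x;1/2)$, which your approach does not isolate.

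For \eqref{dbB12}, the paper simply invokes a formula of Roselle, whereas you supply a self-contained proof: inclusion--exclusion over the succession set gives $S_{n+1}(x)=\sum_{k=0}^{n}(-1)^k\binom{n}{k}x^k A_{n+1-k}(x)$, and your chain-boundary analysis (that the ascent/descent at a non-succession boundary depends only on the relative value-ranks of the adjacent chains) is correct, since the value-runs assigned to the chains are pairwise disjoint intervals of $[n+1]$ ordered exactly by $\sigma$. The resulting EGF identity then matches $C(x,z;1/2)$ as you show. So your proof is both valid and more self-contained on this second point, at the cost of not linking the result to the paper's bi-$\gamma$ decomposition.
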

\begin{proof}
Let $P_n(x)=P_n(x;1/2)$ and $Q_n(x)=Q_n(x;1/2)$.
It follows from Theorem~\ref{mainthm02} that
$d_n^B(x;1/2)=P_n(x)+Q_n(x)$.
By using Corollary~\ref{cor-derangement}, we see that the polynomials $P_n(x)$ and $Q_n(x)$ satisfy the following recurrence system:
\begin{align*}
P_{n+1}(x)&=(1+nx)P_n(x)+x(1-x)P_n'(x)+nxP_{n-1}(x)+Q_n(x),\\
Q_{n+1}(x)&=nxQ_n(x)+x(1-x)Q_n'(x)+nxQ_{n-1}(x),
\end{align*}
with initial conditions $P_0(x)=0,P_1(x)=1,Q_0(x)=1$ and $Q_1(x)=0$.
According to~\cite[Eq.~(3.2)]{Liu07}, the polynomials $Q_n(x)$ satisfy the same recurrence relation and initial conditions as $d_n(x)$, so they agree.
We now prove that $$P_n(x)=\frac{1}{x}d_{n+1}(x).$$
Clearly, it holds for $n=0,1,2$. Assume it holds for $n$. Then we get
\begin{align*}
P_{n+1}(x)&=\frac{1+nx}{x}d_{n+1}(x)+\frac{x(1-x)}{x^2}(xd_{n+1}'(x)-d_{n+1}(x))+\frac{nx}{x}d_n(x)+d_n(x)\\
&=(n+1)d_{n+1}(x)+(1-x)d_{n+1}'(x)+(n+1)d_n(x)\\
&=\frac{1}{x}d_{n+2}(x),
\end{align*}
as desired. The combinatorial interpretation~\eqref{dbB12} follows immediately from~\cite[Eq.~(3.8)]{Roselle68}.
This completes the proof.
\end{proof}

Let $$S_n(x)=\sum_{\pi\in\ms_{n+1}^s}x^{\asc(\pi)}.$$
The following generating function is a restatement of~\eqref{dbB12}:
\begin{equation}\label{Snx-EGF}
\sum_{n=0}^\infty S_n(x)\frac{z^n}{n!}=e^z\left(\frac{1-x}{e^{xz}-xe^z}\right)^2.
\end{equation}
Combining~\eqref{Ankx-deff},~\eqref{dxzB-EGF} and~\eqref{Snx-EGF}, we get the following result.
\begin{corollary}\label{SnxdiBx}
For $n\geq 0$, we have $$S_n(x)=\frac{1}{2^n}\sum_{i=0}^n\binom{n}{i}d_i^B(x)d_{n-i}^B(x),$$
$$S_n(x)=\sum_{i=0}^n\binom{n}{i}A_i(x)d_{n-i}(x).$$
\end{corollary}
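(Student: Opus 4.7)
The plan is to prove both identities by exponential generating function manipulation, comparing the product formula for EGFs against the given closed form for $\sum_{n\geq 0} S_n(x)\,z^n/n!$ in~\eqref{Snx-EGF}. Since both identities are convolution formulas, the standard fact that the EGF of a binomial convolution equals the product of EGFs reduces everything to an algebraic check on closed-form generating functions.

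\textbf{First identity.} The right-hand side is a binomial convolution modified by the factor $1/2^n$. Distributing $2^n = 2^i\,2^{n-i}$ across the convolution, I would write
\[
\sum_{n\geq 0}\frac{z^n}{n!\,2^n}\sum_{i=0}^n\binom{n}{i}d_i^B(x)d_{n-i}^B(x)
=\left(\sum_{k\geq 0}d_k^B(x)\frac{(z/2)^k}{k!}\right)^{\!2}.
\]
By~\eqref{dxzB-EGF} applied at $z/2$, the inner sum equals $\dfrac{(1-x)e^{z/2}}{e^{xz}-xe^{z}}$, so its square is $\dfrac{(1-x)^2 e^z}{(e^{xz}-xe^z)^2}$, which agrees with~\eqref{Snx-EGF}.

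\textbf{Second identity.} This right-hand side is an ordinary binomial convolution, so its EGF is the product of the EGFs of $A_n(x)$ and $d_n(x)$. Setting $k=1$ in~\eqref{Ankx-deff} gives $\sum_i A_i(x)z^i/i! = (1-x)/(e^{z(x-1)}-x)$, which after multiplying numerator and denominator by $e^z$ becomes $(1-x)e^z/(e^{xz}-xe^z)$. Combined with~\eqref{dxz-EGF}, the product equals
\[
\frac{(1-x)e^z}{e^{xz}-xe^z}\cdot\frac{1-x}{e^{xz}-xe^z}=\frac{(1-x)^2 e^z}{(e^{xz}-xe^z)^2},
\]
again matching~\eqref{Snx-EGF}. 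Extracting coefficients of $z^n/n!$ yields the identity.

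There is no real obstacle here: both halves are routine EGF manipulations, and the only step requiring care is the algebraic rewrite $(1-x)/(e^{z(x-1)}-x) = (1-x)e^z/(e^{xz}-xe^z)$ used to align the Eulerian EGF with the derangement EGF. Once both sides are reduced to the common closed form $\dfrac{(1-x)^2 e^z}{(e^{xz}-xe^z)^2}$, the corollary follows by comparing EGFs term-by-term.
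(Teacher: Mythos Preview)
Your proof is correct and follows essentially the same route as the paper, which simply states that the result follows by combining~\eqref{Ankx-deff},~\eqref{dxzB-EGF} and~\eqref{Snx-EGF}. You have supplied the details of the EGF manipulation that the paper leaves implicit, including the use of~\eqref{dxz-EGF} for the second identity.
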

It would be interesting to present a bijective proof of Corollary~\ref{SnxdiBx}.

\bibliographystyle{amsplain}

\end{document}